\documentclass[a4paper,twoside,english]{amsart}
\usepackage{amsmath,amssymb,amsthm,amscd} 
\usepackage{amssymb,fge} 
\usepackage{booktabs}
\usepackage{tikz-cd}
\usepackage{mathtools}
\usepackage{comment}
\usepackage{graphicx}
\usepackage[left=3.2cm,right=3.5cm,top=3cm,bottom=3cm]{geometry}
\usepackage{indentfirst}           
\usepackage{underscore}
\usepackage{enumitem}
\usepackage{relsize}
\usepackage{varwidth}
\usepackage{mathrsfs}
\usepackage{mathtools}
\usepackage{hyperref}
\hypersetup{
    colorlinks=true,
    linkcolor=blue,
    filecolor=magenta,      
    urlcolor=cyan,
}
\urlstyle{same}
\usepackage{amssymb}
\usepackage[all]{xy}
\usepackage{tabularx}
\usepackage{todonotes}

\newtheorem{thm}{Theorem}
\newtheorem{cor}[thm]{Corollary}
\newtheorem{prop}[thm]{Proposition}
\newtheorem{lem}[thm]{Lemma}

\newcommand\scalemath[2]{\scalebox{#1}{\mbox{\ensuremath{\displaystyle #2}}}}

\theoremstyle{definition} 
\newtheorem{defin}[thm]{Definition}
\newtheorem{rem}[thm]{Remark}

\theoremstyle{remark}
\newtheorem{remark}[thm]{Remark}

\numberwithin{thm}{section}
\numberwithin{equation}{thm}
\newcommand{\be}{\begin{equation}}
\newcommand{\ee}{\end{equation}}

\renewcommand{\P}{\ensuremath{{\mathbb{P}}}}
\newcommand{\Q}{\ensuremath{{\mathbb{Q}}}}

\newcommand{\R}{\ensuremath{{\mathbb{R}}}}

\newcommand{\Kbar}{\ensuremath {\overline K}}

\newcommand{\house}{{\mathbf H}}
\newcommand{\houseS}{\house_S}

\DeclareMathOperator{\PrePer}{PrePer}
\DeclareMathOperator{\Per}{Per}

\usepackage[utf8]{inputenc}

\newcolumntype{L}[1]{>{\raggedright\let\newline\\\arraybackslash\hspace{0pt}}m{#1}}
\newcolumntype{C}[1]{>{\centering\let\newline\\\arraybackslash\hspace{0pt}}m{#1}}
\newcolumntype{R}[1]{>{\raggedleft\let\newline\\\arraybackslash\hspace{0pt}}m{#1}}

\title[preperiodic integers in large degree]{Preperiodic Integers for $\scalemath{1.3}{x^d+c}$ in large degree}
\author{John R. Doyle}
\email{john.r.doyle@okstate.edu}
\address{Department of Mathematics, Oklahoma State University, Stillwater, OK 74075 USA}

\author{Wade Hindes}
\email{wmh33@txstate.edu}
\address{Department of Mathematics, Texas State University, San Marcos, TX 78666 USA}
\subjclass[2020]{Primary: 37P15; Secondary: 37P35, 37P05, 11G50, 11J86.}
\begin{document}
\begin{abstract} Given a number field $K$, we completely classify the preperiodic portraits of the maps $x^d+c$ where $c\in K$ is an algebraic integer and $d$ is sufficiently large depending on the degree of $K$. Specifically, we show that there are exactly thirteen such portraits up to the natural action of roots of unity. In particular, we obtain some of the main results of  
recent work of the authors \cite{PreperiodicPointsandABC}
unconditionally for algebraic integers by replacing the use of the abc-conjecture with bounds on linear forms in logarithms. We then include applications of this work to several problems in semigroup dynamics, including the construction of irreducible polynomials and the classification of post-critically finite sets.    
\end{abstract} 
\maketitle

\section{Introduction}
Let $K$ be a number field and let $f\in K[x]$ have degree at least two. One of the central problems in arithmetic dynamics is to determine how the set of preperiodic points of $f$ over $K$, denoted by $\PrePer(f,K)$, depends on $f$ and $K$. For example, the Morton-Silverman Conjecture \cite[\S3.3]{SilvDyn} predicts that $|\PrePer(f,K)|$ is bounded by a constant depending only on the degrees of $f$ and $K$. There has been some progress on this problem \cite{MR2339471,canci2016preperiodic,Ingram,MR1264933,zieve1996cycles} when $f$ is defined over the ring of integers $\mathfrak{o}_K$ of $K$ (or more generally when the primes of bad reduction of $f$ are restricted) and some conditional progress \cite{PreperiodicPointsandABC,looper2021dynamical,looper2021uniform,MR4432520} assuming the $abc$-conjecture. In particular, it was shown in \cite{PreperiodicPointsandABC} that for the specific family $f_{d,c}(x)=x^d+c$ and $c\in K$, a complete classification of the preperiodic structure of $f_{d,c}$ may be achieved when $d$ is large: there are exactly thirteen possible graphs, after accounting for the natural redundancy caused by $d$th roots of unity. In this paper, we prove unconditional versions of the main results of \cite{PreperiodicPointsandABC} for integral $c$ (and more generally for $c\in K$ with restricted denominators) by replacing the use of the $abc$-conjecture with bounds on linear forms in logarithms. In what follows, $h(c)$ denotes the absolute logarithmic Weil height of an algebraic number $c$ and $\mu_{K,d}$ denotes the set of $d$th roots of unity in $K$.  

 \vspace{.1cm}                  
\begin{thm}\label{thm:prep+int}  
Let $K/\mathbb{Q}$ be a number field, let $S$ be a finite set of places of $K$ including the archimedean places, and let $\mathfrak{o}_{K,S}$ be the ring of $S$-integers in $K$. There is a constant
$D(q,t)$, depending only on $t = [K:\Q]$ and the largest prime $q$ dividing a nonarchimedean place in $S$, such that if $d\geq D(q,t)$, then the following statements hold for all $c \in \mathfrak o_{K,S}$: \vspace{.15cm} 
\begin{enumerate}
 \item[\textup{(1)}] If $c$ is nonzero, then the map $f_{d,c}$ has no $K$-rational points of period greater than $3$. \vspace{.25cm}    
    \item[\textup{(2)}] If $h(c)>\log(3)$ and $\PrePer(f_{d,c},K)$ is non-empty, then  
    \[c=y-y^d\;\;\;\text{and}\;\;\;\PrePer(f_{d,c},K)=\{\zeta y\,:\, \zeta\in\mu_{K,d}\}\]
    for some unique $y\in\mathfrak{o}_{K,S}$. \vspace{.25cm}  
    \item[\textup{(3)}] If $h(c)\leq \log(3)$, then all $K$-rational preperiodic points of $f_{d,c}$ are $0$ or roots of unity.  
\end{enumerate}
\end{thm}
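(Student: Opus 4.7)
The plan is to follow the strategy of the authors' earlier paper \cite{PreperiodicPointsandABC}, replacing every appeal to the $abc$-conjecture with an $S$-adic bound on linear forms in logarithms. A first algebraic step is that every $K$-rational preperiodic point $\alpha$ of $f_{d,c}$ must itself lie in $\mathfrak{o}_{K,S}$: if $v_{\mathfrak{p}}(\alpha)<0$ for some $\mathfrak{p}\notin S$, then $v_{\mathfrak{p}}(\alpha^d+c)=d\,v_{\mathfrak{p}}(\alpha)$ because $v_{\mathfrak{p}}(c)\geq 0$, and iteration drives $v_{\mathfrak{p}}(f_{d,c}^n(\alpha))\to -\infty$, contradicting preperiodicity. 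Consequently every orbit we consider lies in $\mathfrak{o}_{K,S}$, and differences $\alpha_i-\alpha_j$ of distinct orbit elements are nonzero $S$-integers.

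Part (3) is the easiest. A standard height inequality for $f_{d,c}$ gives
\[
h(\alpha)\leq \frac{h(c)+\log 2}{d-1}
\]
for every preperiodic point $\alpha$. When $h(c)\leq\log 3$ and $d\geq D(q,t)$ is chosen large enough in terms of $t=[K:\Q]$, the right-hand side falls below the Dobrowolski--Voutier lower bound for the height of a nonzero non-torsion element of a field of degree at most $t$, forcing $\alpha=0$ or $\alpha$ a root of unity.

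Part (1) is the technical heart of the argument, and the place where linear forms in logarithms must do the work previously done by $abc$. Suppose $(\alpha_0,\ldots,\alpha_{n-1})$ is a cycle of length $n\geq 4$. The identities
\[
\alpha_{i+1}-\alpha_{j+1}=\alpha_i^d-\alpha_j^d=\prod_{\zeta\in\mu_d}(\alpha_i-\zeta\alpha_j),
\]
together with the height bound $h(\alpha_i)=O(h(c)/d)$, impose strong multiplicative relations among the cycle differences and their $S$-unit parts. The goal is to convert these relations into a non-degenerate linear form in the logarithms of a small, fixed number of algebraic numbers of controlled height, and then apply an $S$-adic Baker-type bound (e.g.\ Matveev archimedeanly and Yu $\mathfrak{p}$-adically, or the sharp $S$-unit theorems of Evertse--Gy\H{o}ry) to produce a contradiction unless $d$ is bounded in terms of $|S|$, $t$, and the residue characteristics of places in $S$. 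Since $K$ contains only boundedly many primes above each rational prime $p\leq q$, these quantities are all controlled by $(q,t)$, so after taking $D(q,t)$ large enough the cycle of length $n\geq 4$ cannot exist. The main obstacle I anticipate is precisely this step: distilling from the system of factorizations above a single linear form whose entries and integer coefficients are uniform enough that the ensuing Baker estimate depends only on $(q,t)$, not on $h(c)$ or the particular cycle.

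For part (2), the same ideas applied to cycles of lengths $2$ and $3$ (and now using the stronger hypothesis $h(c)>\log 3$) eliminate these as well, so the only periodic points are fixed points. Any fixed point $y\in K$ then satisfies $c=y-y^d$, and a preperiodic point $\alpha$ mapping to $y$ satisfies $\alpha^d=y-c=y^d$, so $\alpha=\zeta y$ for some $\zeta\in\mu_{K,d}$. Uniqueness of $y$ among $K$-rational fixed points amounts to showing that two such $y_1\neq y_2$ would satisfy
\[
\sum_{i=0}^{d-1}y_1^{i}\,y_2^{d-1-i}=1,
\]
which by another application of linear forms in logarithms is impossible for $d\geq D(q,t)$. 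This yields the claimed description $\PrePer(f_{d,c},K)=\{\zeta y:\zeta\in\mu_{K,d}\}$ and completes the plan.
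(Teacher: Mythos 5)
Your part (3) is fine (it matches the paper's use of the preperiodic height bound plus a Northcott/Voutier-type lower bound), but the core of the theorem is not proved. You treat part (1) as the technical heart and propose to rule out cycles of length $n\ge 4$ directly, by extracting from the factorizations $\alpha_{i+1}-\alpha_{j+1}=\prod_{\zeta}(\alpha_i-\zeta\alpha_j)$ a single Baker-type linear form with uniform data --- and you explicitly concede you do not know how to carry out that extraction. That is a genuine gap, and it is also structurally misplaced: in the paper, (1) is never attacked via long cycles at all. It is deduced from (2) and (3): when $h(c)>\log 3$ all periodic points are fixed points by (2), and when $h(c)\le\log 3$ all preperiodic points are $0$ or roots of unity by (3), after which cycles of length $>3$ are precluded by the purely algebraic root-of-unity analysis of \cite[Corollary 3.8, p.~16]{PreperiodicPointsandABC}. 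Note that your sketch cannot cover this second regime: if $h(c)\le\log 3$ the cycle elements have height zero, so there is no ``large'' place and no useful Baker estimate of the kind you describe.

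The missing key idea is the paper's extremal argument, which proves (2) in one stroke. Since $h(c)>\log 3$ and $d$ is large, every $K$-rational preperiodic point has positive height (via $h(\alpha)\ge \tfrac1d h(c)-\log\rho_d$), so the point $\alpha\in\mathfrak o_{K,S}$ maximizing $\houseS$ satisfies $\houseS(\alpha)>1+2^{-(t+4)}$ by the Schinzel--Zassenhaus bound. For any preperiodic $\beta$ with $f_{d,c}(\beta)\ne f_{d,c}(\alpha)$ one has, at a place $v$ realizing $\houseS(\alpha)$,
\[
2|\alpha|_v\;\ge\;|f_{d,c}(\alpha)|_v+|f_{d,c}(\beta)|_v\;\ge\;|\alpha|_v^d\,\bigl|1-(\beta/\alpha)^d\bigr|_v,
\]
and Proposition \ref{prop:linear+forms+in+logs} applied to $1-(\beta/\alpha)^d$ forces $d\le d_2(q,t)$. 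Hence for $d$ large all preperiodic points have the \emph{same} image under $f_{d,c}$; taking $\beta=f_{d,c}(\alpha)$ shows this common image $y$ is a fixed point, and then every preperiodic point is $\zeta y$ with $\zeta\in\mu_{K,d}$. This simultaneously gives uniqueness of $y$ (two fixed points would have equal images, hence be equal) and excludes tails of length $\ge 2$ --- a case your sketch of (2), which only considers fixed points and their one-step preimages and only promises to ``eliminate'' $2$- and $3$-cycles by unstated estimates, never addresses. As written, your proposal would need both this extremal mechanism (or a substitute) and a separate treatment of the $h(c)\le\log 3$ case of (1) before it could be considered a proof.
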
\vspace{.2cm} 

In particular, by combining our result in large degree with earlier results \cite{MR2339471,Ingram} applied in small degree, we obtain the following degree independent bound: \vspace{.1cm} 

\begin{cor}\label{cor:prep+int} There is a constant $B(q,t)$, depending only on $t = [K:\Q]$ and the largest rational prime $q$ dividing a finite place in $S$, such that 
\[\Big|\PrePer(x^d+c,K)\Big|\leq B(q,t)\]
for all $d \ge 2$ and all $c\in\mathfrak{o}_{K,S}$. 
\end{cor}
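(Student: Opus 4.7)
The plan is to split the argument according to whether $d$ exceeds the threshold $D(q,t)$ from Theorem~\ref{thm:prep+int}. Two preliminary reductions keep the bookkeeping clean. First, $|S|$ itself is controlled by $q$ and $t$: the archimedean places of $K$ contribute at most $t$, and each rational prime $p\leq q$ contributes at most $t$ nonarchimedean places of $K$. Second, $\mu_K$ is a finite cyclic group whose order $m$ satisfies $\phi(m)\leq [K:\Q]=t$, so $|\mu_K|$ is bounded by a function of $t$ alone.

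For $d\geq D(q,t)$, I would apply the trichotomy of Theorem~\ref{thm:prep+int} directly. If $h(c)\leq \log 3$ (which in particular covers $c=0$), then case~(3) gives $\PrePer(f_{d,c},K)\subseteq\{0\}\cup\mu_K$, of size at most $1+|\mu_K|$. If $h(c)>\log 3$, then $c\neq 0$ and case~(2) forces $\PrePer(f_{d,c},K)$ to be either empty or the $\mu_{K,d}$-orbit of a single element $y\in\mathfrak{o}_{K,S}$, of cardinality $|\mu_{K,d}|\leq|\mu_K|$. In every subcase $|\PrePer(f_{d,c},K)|\leq 1+|\mu_K|$, and this is bounded in terms of $t$ by the preliminary remark.

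For the finitely many degrees $2\leq d<D(q,t)$, the polynomial $f_{d,c}=x^d+c\in\mathfrak{o}_{K,S}[x]$ has good reduction at every prime outside $S$. The uniform boundedness results of \cite{MR2339471,Ingram} for polynomial maps with restricted bad reduction then supply a bound on $|\PrePer(f_{d,c},K)|$ depending only on $d$, $|S|$, and $[K:\Q]$. Since each of these three quantities is controlled by $q$ and $t$, taking the maximum over the finite range $2\leq d<D(q,t)$ produces a constant $N(q,t)$. Setting $B(q,t)$ equal to the larger of $1+|\mu_K|$ and $N(q,t)$ completes the argument.

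The whole proof is really a case split with citations; the only genuine observation one must make is that both $|S|$ and $|\mu_K|$ are controlled purely by $q$ and $t$, which is what allows the large-$d$ bound from Theorem~\ref{thm:prep+int} and the small-$d$ bound from \cite{MR2339471,Ingram} to be combined into a single constant $B(q,t)$ independent of $d$.
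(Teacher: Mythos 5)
Your argument is correct and follows essentially the same route as the paper: for $d\geq D(q,t)$ Theorem~\ref{thm:prep+int} gives the bound $|\mu_K|+1$ (with $|\mu_K|$ controlled by $t$), while for the finitely many $d<D(q,t)$ the good-reduction uniform bound of \cite{MR2339471} applies once $|S|$ is bounded in terms of $q$ and $t$, and one takes the maximum of the two constants. The only cosmetic difference is that you spell out the bounds on $|S|$ and $|\mu_K|$ that the paper records in a separate remark and a passing sentence.
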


Theorem~\ref{thm:prep+int} can also be used to prove a classification result for preperiodic portraits. For a number field $K$ and a map $\phi \in K[x]$, the ({\bf preperiodic}) {\bf portrait} $\mathscr{P}(\phi, K)$ is the directed graph whose vertices are the elements of $\PrePer(\phi, K)$, with an edge $\alpha \to \beta$ if and only if $\phi(\alpha) = \beta$. Since we are only interested in polynomial maps, we omit from the portrait $\mathscr P(\phi,K)$ the point at $\infty$, which is a fixed point for every polynomial map.

Using the results of \cite{PreperiodicPointsandABC}, we classify all preperiodic portraits realized as $\mathscr P(x^d + c, K)$ for $K$ a number field, $S$ a finite set of places of $K$ (including the archimedean places), $c \in \mathcal O_{K,S}$, and $d \ge D(q,t)$, with the quantity $D(q,t)$ defined as in Theorem~\ref{thm:prep+int}. However, just as in \cite[\textsection 4]{PreperiodicPointsandABC}, the ``classification" problem is somewhat delicate. If $c = y - y^d$ for some $y \in K$, then $\PrePer(x^d + c, K)$ has at least $|\mu_{K,d}|$ preperiodic points, namely $\zeta y$ for all $\zeta\in\mu_{K,d}$; in particular, the number of preperiodic points can be arbitrarily large, so there is no finite list of potential portraits.
However, we \emph{can} provide a finite list of portraits, up to this action of $\mu_{K,d}$ on preimages.

To state our classification result precisely, we define, as in \cite{PreperiodicPointsandABC}, the {\bf skeleton} $\mathscr S(x^d + c, K)$ of a portrait $\mathscr P(x^d + c, K)$ to be the directed graph constructed from $\mathscr P(x^d + c, K)$ as follows:
	\begin{itemize}
	\item For all vertices $\alpha$ in $\mathscr P(\phi,K)$ with at least one $K$-rational preimage under $\phi$, include $\alpha$ as a vertex of $\mathscr S(\phi,K)$.\vspace{.1cm}
	\item If $\alpha$ has been included in $\mathscr S(\phi,K)$ but none of its $K$-rational preimages have, then include a single vertex $v$ to represent all of the $K$-rational preimages of $\alpha$ under $\phi$. \vspace{.1cm}
	\item Edge relations among the vertices in $\mathscr S(\phi,K)$ are inherited from $\mathscr P(\phi,K)$.
	\end{itemize}

\begin{table}
\aboverulesep = 0pt
\belowrulesep = 0pt
\begin{tabular}{|C{.48\textwidth}|C{.48\textwidth}|}
\toprule
{\bf (1)a}\hfill\mbox{}
	&
{\bf (1)b}\hfill\mbox{}\\[-10pt]
\begin{tikzpicture}[scale=.9]
	\tikzset{vertex/.style = {}}
	\tikzset{every loop/.style={min distance=10mm,in=45,out=-45,->}}
	\tikzset{edge/.style={decoration={markings,mark=at position 1 with %
    {\arrow[scale=1.5,>=stealth]{>}}},postaction={decorate}}}
	%
	%
	\node[vertex] (1) at (0, 0) {$\bullet$};
	%
	%
	\draw[-{Latex[length=1.5mm,width=2mm]}] (1) to[out=310, in=50, looseness=7] (1);
\end{tikzpicture}
	&
\begin{tikzpicture}[scale=.9]
	\tikzset{vertex/.style = {}}
	\tikzset{every loop/.style={min distance=10mm,in=45,out=-45,->}}
	\tikzset{edge/.style={decoration={markings,mark=at position 1 with %
    {\arrow[scale=1.5,>=stealth]{>}}},postaction={decorate}}}
	%
	%
	\node[vertex] (1-2) at (0, 0) {$\omega_3$};
	\node[vertex] (1-1) at (2, 0) {$\omega_2$};
	\node[vertex] (1) at (4, 0) {$\omega_1$};
	%
	%
	\draw[-{Latex[length=1.5mm,width=2mm]}] (1) to[out=310, in=50, looseness=7] (1);
	\draw[-{Latex[length=1.5mm,width=2mm]}] (1-1) to (1);
	\draw[-{Latex[length=1.5mm,width=2mm]}] (1-2) to (1-1);
\end{tikzpicture}
\\[-10pt]
\midrule
	{\bf(1)c}\hfill\mbox{}
	&
	{\bf(1)d}\hfill\mbox{}
\\[-10pt]
\begin{tikzpicture}[scale=.9]
	\tikzset{vertex/.style = {}}
	\tikzset{every loop/.style={min distance=10mm,in=45,out=-45,->}}
	\tikzset{edge/.style={decoration={markings,mark=at position 1 with %
    {\arrow[scale=1.5,>=stealth]{>}}},postaction={decorate}}}
	%
	%
	\node[vertex] (1-2) at (0, 0) {$0$};
	\node[vertex] (1-1) at (2, 0) {$\omega_3$};
	\node[vertex] (1) at (4, 0) {$\omega_1$};
	%
	%
	\draw[-{Latex[length=1.5mm,width=2mm]}] (1) to[out=310, in=50, looseness=7] (1);
	\draw[-{Latex[length=1.5mm,width=2mm]}] (1-1) to (1);
	\draw[-{Latex[length=1.5mm,width=2mm]}] (1-2) to (1-1);
\end{tikzpicture}
	&
\begin{tikzpicture}[scale=1]
	\tikzset{vertex/.style = {}}
	\tikzset{every loop/.style={min distance=10mm,in=45,out=-45,->}}
	\tikzset{edge/.style={decoration={markings,mark=at position 1 with %
    {\arrow[scale=1.5,>=stealth]{>}}},postaction={decorate}}}
	%
	%
	\node[vertex] (1) at (2, 0) {$\omega_1$};
	\node[vertex] (1-1a) at (.166, .5) {$\omega_3$};
	\node[vertex] (1-1b) at (.166, -.5) {$\omega_2$};
	\node[vertex] (1-2a) at (-1.833, .5) {$0$};
	\node[vertex] (1-2b) at (-1.833, -.5) {$\omega_4$};
	%
	%
	\draw[-{Latex[length=1.5mm,width=2mm]}] (1) to[out=310, in=50, looseness=7] (1);
	\draw[-{Latex[length=1.5mm,width=2mm]}] (1-1a) to (1);
	\draw[-{Latex[length=1.5mm,width=2mm]}] (1-1b) to (1);
	\draw[-{Latex[length=1.5mm,width=2mm]}] (1-2a) to (1-1a);
	\draw[-{Latex[length=1.5mm,width=2mm]}] (1-2b) to (1-1b);
\end{tikzpicture}
	\\[-10pt]
\midrule
	{\bf(1)e}\hfill\mbox{}
	&
	{\bf(1,1)}\hfill\mbox{}\\[-15pt]
\begin{tikzpicture}[scale=1]
	\tikzset{vertex/.style = {}}
	\tikzset{every loop/.style={min distance=10mm,in=45,out=-45,->}}
	\tikzset{edge/.style={decoration={markings,mark=at position 1 with %
    {\arrow[scale=1.5,>=stealth]{>}}},postaction={decorate}}}
	%
	%
	\node[vertex] (1) at (2, 0) {$\omega_1$};
	\node[vertex] (1-1a) at (.166, .5) {$\omega_3$};
	\node[vertex] (1-1b) at (.166, -.5) {$\omega_2$};
	\node[vertex] (1-2a) at (-1.833, .5) {$0$};
	\node[vertex] (1-2b) at (-1.833, -.5) {$\omega_4$};
        \node[vertex] (1-3a) at (-3.833, .5) {$\omega_5$};
	%
	%
	\draw[-{Latex[length=1.5mm,width=2mm]}] (1) to[out=310, in=50, looseness=7] (1);
	\draw[-{Latex[length=1.5mm,width=2mm]}] (1-1a) to (1);
	\draw[-{Latex[length=1.5mm,width=2mm]}] (1-1b) to (1);
	\draw[-{Latex[length=1.5mm,width=2mm]}] (1-2a) to (1-1a);
	\draw[-{Latex[length=1.5mm,width=2mm]}] (1-2b) to (1-1b);
        \draw[-{Latex[length=1.5mm,width=2mm]}] (1-3a) to (1-2a);
\end{tikzpicture}
	&
\begin{tikzpicture}[scale=.9]
	\tikzset{vertex/.style = {}}
	\tikzset{every loop/.style={min distance=10mm,in=45,out=-45,->}}
	\tikzset{edge/.style={decoration={markings,mark=at position 1 with %
    {\arrow[scale=1.5,>=stealth]{>}}},postaction={decorate}}}
	%
	%
	\node[vertex] (1a) at (0, 0) {$\omega_1$};
	\node[vertex] (1b) at (2, 0) {$\omega_2$};
	%
	%
	\draw[-{Latex[length=1.5mm,width=2mm]}] (1a) to[out=310, in=50, looseness=7] (1a);
	\draw[-{Latex[length=1.5mm,width=2mm]}] (1b) to[out=310, in=50, looseness=7] (1b);
\end{tikzpicture}
	\\[-10pt]
\midrule
	{\bf(2)a}\hfill\mbox{}
	&
	{\bf(2)b}\hfill\mbox{}\\
\begin{tikzpicture}[scale=.9]
	\tikzset{vertex/.style = {}}
	\tikzset{every loop/.style={min distance=10mm,in=45,out=-45,->}}
	\tikzset{edge/.style={decoration={markings,mark=at position 1 with %
    {\arrow[scale=1.5,>=stealth]{>}}},postaction={decorate}}}
	%
	%
	\node[vertex] (2c) at (2, 0) {$\omega_1$};
	\node[vertex] (2d) at (4, 0) {$\omega_2$};
	%
	%
	\draw[-{Latex[length=1.5mm,width=2mm]}] (2c) to[bend right=30] (2d);
	\draw[-{Latex[length=1.5mm,width=2mm]}] (2d) to[bend right=30] (2c);
\end{tikzpicture}
	&
\begin{tikzpicture}[scale=.9]
	\tikzset{vertex/.style = {}}
	\tikzset{every loop/.style={min distance=10mm,in=45,out=-45,->}}
	\tikzset{edge/.style={decoration={markings,mark=at position 1 with %
    {\arrow[scale=1.5,>=stealth]{>}}},postaction={decorate}}}
	%
	%
	\node[vertex] (2c) at (2, 0) {$0$};
	\node[vertex] (2d) at (4, 0) {$\omega_3$};
	%
	%
	\draw[-{Latex[length=1.5mm,width=2mm]}] (2c) to[bend right=30] (2d);
	\draw[-{Latex[length=1.5mm,width=2mm]}] (2d) to[bend right=30] (2c);
\end{tikzpicture}
	\\
\midrule
	{\bf(2)c}\hfill\mbox{}
	&
	{\bf(2,1,1)}\hfill\mbox{}\\[-10pt]
\begin{tikzpicture}[scale=.9]
	\tikzset{vertex/.style = {}}
	\tikzset{every loop/.style={min distance=10mm,in=45,out=-45,->}}
	\tikzset{edge/.style={decoration={markings,mark=at position 1 with %
    {\arrow[scale=1.5,>=stealth]{>}}},postaction={decorate}}}
	%
	%
	\node[vertex] (2a) at (2, 0) {$0$};
	\node[vertex] (2b) at (4, 0) {$\omega_3$};
	\node[vertex] (2-1a) at (.166, .5) {$\omega_1$};
	\node[vertex] (2-1b) at (.166, -.5) {$\omega_2$};
	\node[vertex] (2-2a) at (-1.833, .5) {$\omega_4$};
	\node[vertex] (2-2b) at (-1.833, -.5) {$\omega_5$};
	%
	%
	\draw[-{Latex[length=1.5mm,width=2mm]}] (2a) to[bend right=30] (2b);
	\draw[-{Latex[length=1.5mm,width=2mm]}] (2b) to[bend right=30] (2a);
	\draw[-{Latex[length=1.5mm,width=2mm]}] (2-1a) to (2a);
	\draw[-{Latex[length=1.5mm,width=2mm]}] (2-1b) to (2a);
	\draw[-{Latex[length=1.5mm,width=2mm]}] (2-2a) to (2-1a);
	\draw[-{Latex[length=1.5mm,width=2mm]}] (2-2b) to (2-1b);
\end{tikzpicture}
	&
\begin{tikzpicture}[scale=.9]
	\tikzset{vertex/.style = {}}
	\tikzset{every loop/.style={min distance=10mm,in=45,out=-45,->}}
	\tikzset{edge/.style={decoration={markings,mark=at position 1 with %
    {\arrow[scale=1.5,>=stealth]{>}}},postaction={decorate}}}
	%
	%
	\node[vertex] (2a) at  (-2, 0) {$0$};
	\node[vertex] (2b) at  (0, 0) {$\omega_3$};
	\node[vertex] (1a) at (2, 0) {$\omega_1$};
	\node[vertex] (1b) at (4, 0) {$\omega_2$};
	%
	%
	\draw[-{Latex[length=1.5mm,width=2mm]}] (2a) to[bend right=30] (2b);
	\draw[-{Latex[length=1.5mm,width=2mm]}] (2b) to[bend right=30] (2a);
	\draw[-{Latex[length=1.5mm,width=2mm]}] (1a) to[out=310, in=50, looseness=7] (1a);
	\draw[-{Latex[length=1.5mm,width=2mm]}] (1b) to[out=310, in=50, looseness=7] (1b);
\end{tikzpicture}
	\\[-10pt]
\midrule
	{\bf(2,2)}\hfill\mbox{}
	&
	{\bf(3)}\hfill\mbox{}\\[-5pt]
\begin{tikzpicture}[scale=.9]
	\tikzset{vertex/.style = {}}
	\tikzset{every loop/.style={min distance=10mm,in=45,out=-45,->}}
	\tikzset{edge/.style={decoration={markings,mark=at position 1 with %
    {\arrow[scale=1.5,>=stealth]{>}}},postaction={decorate}}}
	%
	%
	\node[vertex] (2a) at  (-2, 0) {$0$};
	\node[vertex] (2b) at  (0, 0) {$\omega_3$};
	\node[vertex] (2c) at (2, 0) {$\omega_1$};
	\node[vertex] (2d) at (4, 0) {$\omega_2$};
	%
	%
	\draw[-{Latex[length=1.5mm,width=2mm]}] (2a) to[bend right=30] (2b);
	\draw[-{Latex[length=1.5mm,width=2mm]}] (2b) to[bend right=30] (2a);
	\draw[-{Latex[length=1.5mm,width=2mm]}] (2c) to[bend right=30] (2d);
	\draw[-{Latex[length=1.5mm,width=2mm]}] (2d) to[bend right=30] (2c);
\end{tikzpicture}
	&
\begin{tikzpicture}[scale=.9]
	\tikzset{vertex/.style = {}}
	\tikzset{every loop/.style={min distance=10mm,in=45,out=-45,->}}
	\tikzset{edge/.style={decoration={markings,mark=at position 1 with %
    {\arrow[scale=1.5,>=stealth]{>}}},postaction={decorate}}}
	%
	%
	\node[vertex] (3a) at  (-1, 0) {$0$};
	\node[vertex] (3b) at  (.5, -0.866) {$\omega_3$};
	\node[vertex] (3c) at (.5, 0.866) {$\omega_1$};
	%
	%
	\draw[-{Latex[length=1.5mm,width=2mm]}] (3a) to[bend right=30] (3b);
	\draw[-{Latex[length=1.5mm,width=2mm]}] (3b) to[bend right=30] (3c);
	\draw[-{Latex[length=1.5mm,width=2mm]}] (3c) to[bend right=30] (3a);
\end{tikzpicture}
	\\
\bottomrule
\end{tabular}
\caption{Nonempty graphs that can be realized as $\mathscr S(z^d + c, K)$ for nonzero $c \in \mathfrak o_{K,S}$ and $d > D(q,t)$. Here $\omega_i$ always represents a root of unity.}
\label{tab:SPs}
\end{table}

\begin{cor}
Let $K$ be a number field, let $S$ be a finite set of places of $K$ including the archimedean places, let $t = [K:\Q]$, let $q$ be the largest residue characteristic of a nonarchimedean place in $S$, and let $D(q,t)$ be as in Theorem~\ref{thm:prep+int}. If $d > D(q,t)$, and if $c \in \mathfrak o_{K,S}$ is nonzero, then the skeleton $\mathscr S(x^d + c, K)$ is isomorphic to either the empty graph or one of the twelve graphs in Table~\ref{tab:SPs}.
\end{cor}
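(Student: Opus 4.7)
The plan is to deduce the classification directly from Theorem~\ref{thm:prep+int} by branching on $h(c)$ and then enumerating cycle structures. In the high-height regime $h(c) > \log 3$, part (2) of the theorem determines $\PrePer(f_{d,c},K)$ completely. In the low-height regime $h(c) \leq \log 3$, part (3) confines $\PrePer(f_{d,c},K)$ to $\{0\}\cup\mu_K$, which is finite, and part (1) caps every periodic orbit length at $3$; both restrictions reduce the problem to a bounded combinatorial enumeration.

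Suppose first that $h(c) > \log 3$. If $\PrePer(f_{d,c},K)$ is empty, the skeleton is the empty graph. Otherwise, part (2) yields $c = y - y^d$ and $\PrePer(f_{d,c},K) = \{\zeta y : \zeta \in \mu_{K,d}\}$. A direct check shows that $y$ is a fixed point of $f_{d,c}$ and that every $\zeta y$ maps to $y$. For $\zeta \neq 1$ the point $\zeta y$ has no $K$-rational preimage: any such preimage is preperiodic, hence of the form $\eta y$ with $\eta \in \mu_{K,d}$, and then $(\eta y)^d + c = \zeta y$ combined with $\eta^d = 1$ forces $(\zeta - 1)y = 0$, contradicting $c \neq 0$ (whence $y \neq 0$). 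Therefore $y$ is the only vertex of the skeleton and carries a loop, giving graph (1)a.

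Next suppose $h(c) \leq \log 3$. Parts (1) and (3) of Theorem~\ref{thm:prep+int} imply that every preperiodic point lies in $\{0\}\cup\mu_K$ and that every periodic orbit has length at most $3$. I would enumerate the possible skeletons according to the cycle structure of $f_{d,c}$ on $\PrePer(f_{d,c},K)$: a single fixed point with successively deeper layers of $K$-rational preimages drawn from $\{0\}\cup\mu_K$ produces (1)a, (1)b, (1)c, (1)d, (1)e; two distinct fixed points produce (1,1); a single $2$-cycle, with or without $0$ as one of its vertices and with or without preimage layers, produces (2)a, (2)b, (2)c; a $2$-cycle together with two fixed points produces (2,1,1); two disjoint $2$-cycles produce (2,2); and a $3$-cycle produces (3). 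For each configuration, the value of $c$ is pinned down by the periodic data via $\alpha^d + c = f_{d,c}(\alpha)$, and counting preimage layers reduces to counting solutions of $\beta^d = \alpha - c$ with $\beta \in \{0\}\cup\mu_K$, subject to the collapsing rule defining $\mathscr S$.

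The principal obstacle is ensuring exhaustiveness: one must rule out further branches at each layer and verify that no additional configuration can appear for $d \geq D(q,t)$. This is precisely the case analysis carried out in \cite[\S4]{PreperiodicPointsandABC} for the $abc$-conditional classification; that argument applies verbatim here once the structural input it required is supplied unconditionally by Theorem~\ref{thm:prep+int}.
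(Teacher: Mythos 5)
Your proposal is correct and follows essentially the same route as the paper: branch on $h(c)$, use Theorem~\ref{thm:prep+int}(2) to see that the large-height case gives exactly the skeleton (1)a (your extra check that $\zeta y$, $\zeta\neq 1$, has no $K$-rational preimage is a welcome but minor elaboration), and in the small-height case invoke the case analysis of \cite[Theorem 4.3]{PreperiodicPointsandABC}, which does not use the $abc$-conjecture and applies verbatim given parts (1) and (3).
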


\begin{proof}
Let $d > D(q,t)$. Theorem~\ref{thm:prep+int} says that if $h(c) > \log3$, then $\PrePer(x^d+c,K)$ consists of a single fixed point $y$ and its preimages $\zeta y$ with $\zeta \in \mu_{K,d}$; thus, the skeleton is the one labeled (1)a in Table~\ref{tab:SPs}. Otherwise, all of the preperiodic points are $0$ or roots of unity, and the proof of \cite[Theorem 4.3]{PreperiodicPointsandABC}---which does not require the $abc$ conjecture---follows through verbatim.
\end{proof}

Our main applications of this work on preperiodic points lie in the realm of semigroup dynamics. Namely, given a set of polynomials $f_1,\dots,f_s\in K[x]$ we let $G=\langle f_1,\dots,f_s\rangle$ denote the semigroup generated by the $f_i$ under composition. For example, $\langle f\rangle$ is simply the set of iterates of $f$. One problem that arises naturally in arithmetic dynamics is to determine a set of conditions that ensure that the semigroup $G$ contains many irreducible polynomials. To make this statement more precise, we make the following definition.
\begin{defin} Let $K$ be a field and let $G=\langle f_1,\dots,f_s\rangle$ for some $f_1,\dots, f_s\in K[x]$ all of degree at least two. Then we say that $G$ \emph{contains a positive proportion of irreducible polynomials over $K$} if 
\[\liminf_{B\rightarrow\infty}\frac{\#\{g\in G\,: \deg(g)\leq B\;\text{and $g$ is irreducible over $K$}\}}{\#\{g\in G\,: \deg(g)\leq B\}}>0.
\vspace{.1cm} 
\]
\end{defin}
Of course, at the very least $G$ must contain a single irreducible polynomial if it has any chance of containing a positive proportion of them. But is this enough? As an application of Theorem \ref{thm:prep+int}, we prove that the answer is yes when $K$ is a number field and $G$ is of the form $G=\langle x^{d}+c_1,\dots, x^{d}+c_s\rangle$ for some coefficients $c_1,\dots,c_s\in \mathfrak{o}_{K,S}$ of large enough height, some degree $d\geq2$ with no small prime divisors, and some sufficient number of generators $s$; compare to earlier results in \cite{MR4899738,MR4562069} when $K=\mathbb{Q}$ and $c_1,\dots,c_s\in\mathbb{Z}$ and more general results in \cite{PreperiodicPointsandABC}, which make no integrality assumptions but are contingent on the $abc$-conjecture.      
\begin{thm}\label{thm:main+irreducible} Let $K$ be a number field, let $S$ be a finite set of places of $K$ containing the archimedean ones, let $q$ be the largest rational prime dividing a place of $S$, let $t:=[K:\mathbb{Q}]$, and let $G=\langle x^{d}+c_1,\dots, x^{d}+c_s\rangle$ for some $c_1,\dots,c_s\in \mathfrak{o}_{K,S}$ and $d\geq2$. Then there exists a constant $M(q,t)$ depending only on $q$ and $t$ such that if the following conditions are all satisfied:\vspace{.15cm} 
\begin{enumerate}
    \item[\textup{(1)}] $h(c_i)>\log(3)$ for all $1\leq i\leq s$, \vspace{.2cm}    
    \item[\textup{(2)}] every prime $p|d$ satisfies $p> M(q,t)$, \vspace{.2cm} 
    \item[\textup{(3)}] the number of generators $s$ of $G$ satisfies $s>2|\mu_{K,d}|$, \vspace{.15cm} 
\end{enumerate}
then $G$ contains a positive proportion of irreducible polynomials over $K$ if and only if it contains at least one irreducible polynomial over $K$.  
\end{thm}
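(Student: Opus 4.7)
The ``only if'' direction is trivial, so suppose $G$ contains an irreducible polynomial $g_0$; I plan to upgrade this to positive proportion by combining a Capelli-type irreducibility criterion with the rigidity of preperiodic portraits in Theorem~\ref{thm:prep+int}. Choosing $M(q,t) > 2$ forces every prime $p \mid d$ to be odd, so Capelli's theorem on binomials gives: $x^d - a$ is irreducible over a field $L$ iff $a \notin L^p$ for every $p \mid d$. Iterating the criterion, a composition $g = f_{j_n} \circ \cdots \circ f_{j_1}$ is irreducible over $K$ iff, at each level $k$ and for any root $\alpha_k$ of the $k$-fold partial composition, $\alpha_k - c_{j_{k+1}} \notin K(\alpha_k)^p$ for every prime $p \mid d$.

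The heart of the argument is a uniform bound, given a fixed root $\alpha$ arising in the Capelli chain, on the number of ``bad'' generator indices $j$ satisfying $\alpha - c_j = \gamma^p$ for some $\gamma \in K(\alpha)$ and some $p \mid d$. Such an identity forces $\gamma$ to be a $K(\alpha)$-rational preimage of $\alpha$ under (a $p$-th-power branch of) $f_{d,c_j}$. Applying Theorem~\ref{thm:prep+int} over $K(\alpha)$---with $M(q,t)$ enlarged so that $[K(\alpha):\Q] \le t \cdot d^n$ is admissible and condition~(1) places us in case~(2)---severely restricts the $K(\alpha)$-rational preperiodic structure of each $f_{d,c_j}$. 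Combining this with an $S$-unit / linear-forms-in-logarithms analysis of the difference $c_j - c_{j'} = (\gamma')^p - \gamma^p$ for two bad indices $j \neq j'$ then yields a cap of $2|\mu_{K,d}|$ bad indices per root $\alpha$.

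Condition~(3), $s > 2|\mu_{K,d}|$, then guarantees at least one good extension at every level, exhibiting an infinite subtree of irreducibles in $G$. The principal obstacle is promoting this to a \emph{positive proportion}, since the naive branching estimate only gives $((s-2|\mu_{K,d}|)/s)^n$ of the total $s^n$ length-$n$ compositions, which tends to zero. To remedy this I expect to show that once the intermediate root $\alpha_k$ has Weil height $\gg 1$---which happens after a bounded number of levels, as the orbit height grows roughly like $d^k$---the bad condition $\alpha_k - c_j \in K(\alpha_k)^p$ becomes incompatible with $h(c_j) > \log 3$ and $c_j \in \mathfrak{o}_{K,S}$ for all but a \emph{fixed} set of at most $|\mu_{K,d}|$ indices, independent of the level. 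A Borel--Cantelli-type argument then shows that a positive proportion of length-$n$ sequences avoid bad events for all but finitely many levels, yielding the required positive density of irreducible compositions. This stabilization of the bad set---relying on quantitative height growth along the composition tree together with the linear-forms-in-logarithms bounds underlying Theorem~\ref{thm:prep+int}---is the deepest step of the argument.
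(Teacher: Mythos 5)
There is a genuine gap---in fact two. First, your plan to run Capelli level by level requires deciding whether $\alpha_k - c_{j}$ is a $p$-th power in the growing fields $K(\alpha_k)$, and you propose to control this by applying Theorem~\ref{thm:prep+int} over $K(\alpha_k)$ after ``enlarging $M(q,t)$ so that $[K(\alpha):\Q]\le t\cdot d^n$ is admissible.'' That is circular: the constants $D(q,t)$ and $M(q,t)$ must depend only on $q$ and $t$, while $[K(\alpha_k):\Q]$ can be as large as $t\,d^{k}$ with $k$ unbounded, so no single admissible constant exists. Second, and independently, the positive-proportion step is not actually carried out. You correctly note that the naive branching bound $\bigl((s-2|\mu_{K,d}|)/s\bigr)^n$ tends to zero, but the proposed repair fails logically: a composition is irreducible only if the non-power condition holds at \emph{every} level of the Capelli chain, so ``avoiding bad events for all but finitely many levels'' gives nothing, and the claimed ``stabilization of the bad set'' (at most $|\mu_{K,d}|$ bad indices, independent of the level, once the height is large) is precisely the hard content and is only asserted. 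The asserted cap of $2|\mu_{K,d}|$ bad indices per root is likewise unproved, and condition (3) is being used for a purpose the paper never needs it for.

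The paper's route avoids both problems by never leaving the base field $K$. Reducibility of $g\circ f$ is detected by Theorem~\ref{thm:irreducibility+test}: it forces $g(f(0))=\pm 4^{e}y^{p}$ with $y\in K$ and $p\mid d$. Linear forms in logarithms then give: irreducible implies stable (Theorem~\ref{thm:stable}), and any relation $f^{n}(a)=ry^{p}$ with $n\ge N(d,q,t)$ forces $a$ preperiodic and $ry^p$ periodic (Proposition~\ref{prop:power}); by Theorem~\ref{thm:prep+int} the periodic point is then a fixed point of the special form $\zeta P$ with $P$ a ``powered fixed point.'' The case analysis (Propositions 3.10, 3.12, 3.14, assembled in Theorem~\ref{thm:main+irreducible+more+specific}) shows that either there are $f_1,f_2\in G$ with $\{f_1^{N}\circ f_2^{N}\circ g: g\in G\}$ entirely irreducible, or else every generator of height $>\log 3$ lies in $\{x^d+(\zeta_1P)-(\zeta_1P)^d,\ x^d+\zeta_2P:\zeta_i\in\mu_{K,d}\}$, a set of at most $2|\mu_{K,d}|$ maps. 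Conditions (1) and (3) rule out the second alternative---that is the actual role of $s>2|\mu_{K,d}|$---and in the first alternative positive proportion is immediate: since $G$ is free, prepending the fixed word $f_1^{N}\circ f_2^{N}$ (of bounded degree $d^{2N}$) embeds $G$ into the set of irreducible elements with only a bounded loss in the degree count, so no level-by-level density argument is needed.
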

\begin{remark} In fact under the hypotheses of Theorem \ref{thm:main+irreducible}, we show that if $G$ contains at least one irreducible polynomial, then there exist $f_1,f_2\in G$ and a constant $N=N(d,q,t)$ depending only on $d$, $q$, and $t$ such that
\[
\{f_1^N\circ f_2^N\circ g\,:\, g\in G\}
\vspace{.1cm} 
\]
is a set of irreducible polynomials in $K[x]$. In particular, it follows from the fact that $G$ is free (see \cite[Theorem 3.1]{DiscCont}) that $G$ contains a positive proportion of irreducible polynomials and there is a non-trivial lower bound on this proportion depending only on $d$, $q$, $s$, and $t$.
\end{remark}
\begin{remark} Also, we note that by increasing $M(q,t)$, we may assume that $|\mu_{K,d}|=1$. In particular, we have succeeded in producing many irreducible polynomials in semigroups $G=\langle x^{d}+c_1,\dots x^{d}+c_s\rangle$ for some $c_1,\dots,c_s\in \mathfrak{o}_{K,S}$ whenever the heights of the coefficients $c_1,\dots,c_s$ are all at least $\log3$, the common degree $d$ has no small divisors, and $G$ is generated by at least three polynomials; compare to the main theorems of \cite{MR4899738,MR4562069}.            
\end{remark}
As a further application of our results on preperiodic points, we study a semigroup analog of post-critically finite (or PCF) maps. Recall that a rational function $f\in K(x)$ is called \emph{post-critically finite} if the the orbit $O_f(\gamma)=\{g(\gamma):g\in\langle f\rangle\}$ is finite for all critical points $\gamma$ of $f$. To formulate a general version of this property, we say that a point $P\in \mathbb{P}^1(\overline{K})$ is a \emph{finite orbit point} for the semigroup $G$ if the orbit,
\[O_G(P):=\{g(P)\,:\, g\in G\},\]
is a finite set. For larger semigroups, demanding that the full semigroup orbit of every critical point of every map in a generating set be finite seems too restrictive to be interesting. With this in mind, we begin our investigation of semigroup analogs of PCF maps by considering $G=\langle x^{d_1}+c_1,\dots, x^{d_s}+c_s \rangle$, where only a single critical point $\gamma=0$ must be tracked. However, even in this situation, demanding that the full semigroup orbit of zero be finite seems too restrictive. On the other hand, weakening this condition to assume that the semigroup orbit of zero \emph{contains} a finite orbit point may be a more useful generalization of PCF. For one, these two notions are the same in the case of iterating a single function. Moreover, they play a similar role in dynamical Galois theory: the iterates of PCF maps fail to produce new ramified primes \cite{bridy2018abc} at infinitely many stages of iteration \cite{aitken2005finitely,bridy2015finite} (and, in fact, never result in finite index arboreal representations \cite[Theorem 3.1]{MR3220023}), and an analogous property holds for $G=\langle x^{2}+c_1,\dots, x^{2}+c_s \rangle$ when the orbit of zero contains a finite orbit point; in this case, the set of infinite sequences of elements of $G$ that fail to produce a new ramified prime at infinitely many stages of iteration has full measure by \cite[Proposition 6.2]{hindes2021dynamical}. In particular, since these semigroups are, at the very least, likely obstructions to producing large dynamical Galois groups, it would be useful to have a classification result for these semigroups.  
To illustrate this point, it was shown in \cite[Theorem 1.1]{MR4680482} that the only $G=\langle x^{2}+c_1,\dots, x^{2}+c_s \rangle$ defined over $\mathbb{Q}$ for which the orbit of zero contains a finite orbit point are:  
\[G=\langle x^2\rangle,\; \langle x^2-1\rangle,\; \langle x^2-2\rangle,\; \langle x^2,x^2-1\rangle,\; \langle x^2-2, x^2-3\rangle,\; \langle x^2-2,x^2-6\rangle. \]
In particular, note that the size of the generating set of such semigroups is at most two, and it is natural to wonder whether a similar type of result holds for any $d\geq2$ and $t\geq1$. As evidence from the case of iterating a single function, there are only finitely many PCF polynomials $f\in\overline{\mathbb{Q}}[x]$ of degree at most $d$ defined over a number field of degree at most $t$; see \cite{MR2885981}. In particular, with these finiteness results in mind, we use Theorem \ref{thm:prep+int} above to bound the number of generators of unicritically generated semigroups for which the orbit of zero contains a finite orbit point: 
\begin{cor}\label{cor:orbit+zero+contains+finite} Let $t\geq1$ and let $G=\langle x^{d}+c_1,\dots,x^{d}+c_s \rangle$ for some $c_1,\dots c_s\in\overline{\mathbb{Q}}$ with $[\mathbb{Q}(c_1,\dots c_s):\mathbb{Q}]\leq t$ and some $d\geq2$. Then, there exists a constant $s(t)$ depending only $t$ such that if the orbit of zero contains a finite orbit point, then $s\leq s(t)$.         
\end{cor}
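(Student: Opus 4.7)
The plan is to extract from the hypothesis a finite $G$-invariant subset of $K:=\mathbb{Q}(c_1,\ldots,c_s)$, observe that both $s$ and the size of this set are controlled by the $K$-rational preperiodic set of any single generator, and then invoke Theorem~\ref{thm:prep+int} to force that preperiodic set to be small in a way that depends only on $t$.

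First I would fix some $P\in O_G(0)$ with $O_G(P)$ finite, guaranteed by hypothesis. Because $0\in K$ and every generator $f_i=x^d+c_i$ has coefficients in $K$, the point $P$ again lies in $K$. Set $T:=O_G(P)\cup\{P\}$. For any $Q\in T$ and any generator $f_i$, either $Q=P$ and $f_i(Q)\in O_G(P)\subseteq T$, or $Q=g(P)$ for some $g\in G$ and $f_i(Q)=(f_i\circ g)(P)\in O_G(P)\subseteq T$ since $f_i\circ g\in G$. Thus $T$ is a finite, forward-invariant subset of $K$, and in particular every element of $T$ is preperiodic for each $f_i$, giving $T\subseteq \PrePer(f_i,K)$ for every $i$.

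Next I would observe that the map $i\mapsto f_i(P)=P^d+c_i$ is an injection from $\{1,\ldots,s\}$ into $T$: distinct $c_i$ produce distinct values of $f_i(P)$, all of which lie in $O_G(P)\subseteq T$. This immediately gives
\[
s\;\le\;|T|\;\le\;|\PrePer(f_i,K)| \qquad\text{for every }i,
\]
so the corollary reduces to bounding $|\PrePer(f_1,K)|$ by a constant depending only on $t$.

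For this final step I would let $S$ consist of the archimedean places of $K$ together with the finite places at which $|c_1|_v>1$, and let $q$ be the largest rational prime below a place of $S$, so that $c_1\in\mathfrak{o}_{K,S}$. In the regime $d\ge D(q,t)$, Theorem~\ref{thm:prep+int} pins $\PrePer(f_1,K)$ down to one of three shapes: empty, a single $\mu_{K,d}$-orbit $\{\zeta y:\zeta\in\mu_{K,d}\}$, or a subset of $\{0\}\cup\mu_K$. Each has size at most $1+|\mu_K|$, and a number field of degree at most $t$ contains only boundedly many roots of unity, so this delivers the desired bound $s\le s(t)$ in the large-$d$ regime. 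The main obstacle, as I see it, is the complementary range $d<D(q,t)$: here Corollary~\ref{cor:prep+int} still gives $|\PrePer(f_1,K)|\le B(q,t)$, but this bound a priori depends on $q$ and hence on $c_1$. Removing that dependence requires exploiting the rigidity that all $c_i$ share the same finite forward-invariant set $T\subseteq K$, which forces strong local compatibility of the $c_i$ at each bad place and should allow one to replace $q$ by a quantity controlled by $t$ alone, thereby producing a single constant $s(t)$ that works uniformly in $d\ge 2$.
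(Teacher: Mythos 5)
Your reduction $s\le |\PrePer(f_1,K)|$ is sound and is in fact the same mechanism the paper uses (it is Corollary~\ref{cor:semigroup+int+finite+orbit}, specialized to equal degrees): a finite orbit point $P$ yields a finite forward-invariant set containing the distinct values $P^d+c_i$, all preperiodic for $f_1$. The problem is the final step, and you have correctly located but not closed it. Your argument, as written, only produces a bound $s\le B(q,t)$ where $q$ records the primes at which the $c_i$ fail to be integral, because both the threshold $D(q,t)$ in Theorem~\ref{thm:prep+int} and the small-degree bound in Corollary~\ref{cor:prep+int} (which rests on good reduction outside $S$) depend on $q$. The closing sentence, that the shared invariant set ``forces strong local compatibility of the $c_i$ at each bad place and should allow one to replace $q$ by a quantity controlled by $t$ alone,'' is a hope rather than an argument: nothing in your proposal produces any bound on the residue characteristics of the bad places, and without such a bound the constant genuinely depends on the $c_i$, which is weaker than the stated corollary.

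What actually closes the gap is stronger and more specific than local compatibility: the hypothesis that the $G$-orbit of $0$ contains a finite orbit point forces $c_1,\dots,c_s\in\mathfrak o_K$, i.e.\ there are \emph{no} nonarchimedean bad places at all. This is \cite[Corollary 3.3]{MR4680482}, and it is the key input in the paper's proof: once the $c_i$ are algebraic integers one may take $S=M_K^\infty$, so the constant from Corollary~\ref{cor:semigroup+int+finite+orbit} (equivalently from Corollary~\ref{cor:prep+int}) depends only on $t=[K:\Q]$, uniformly in $d\ge 2$. The underlying reason is the standard nonarchimedean escape estimate: if $|c_i|_v>1$ at a finite place $v$, points in the orbit of $0$ are forced to have large $v$-adic absolute value and cannot remain preperiodic, so the hypothesis rules such $v$ out entirely. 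You would need to prove this integrality statement (or cite it) to make your sketch into a proof; as it stands, the step you flag as ``the main obstacle'' is precisely the content that is missing.
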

An outline of this paper is as follows: we prove the results on preperiodic points along with the applications to PCF sets in Section \ref{sec:preper} and prove the irreducibility results in Section \ref{sec:irre}.
\\[5pt]
\textbf{Acknowledgements:} We thank Alina Ostafe for discussions related to this work. The first author's research was partially supported by NSF grant DMS-2302394.
\section{Preperiodic Integers}\label{sec:preper}
We begin this section with notation. For a number field $K$, we denote by $M_K$ (resp., $M_K^\infty$) the set of places (resp., archimedean places) of $K$. We denote by $\mathfrak o_K$ the ring of integers of $K$. If $S$ is a finite set of places of $K$ containing $M_K^\infty$, then the set of $S$-integers of $K$ is
    \[
        \mathfrak o_{K,S} := \{\alpha \in K : |\alpha|_v \le 1 \text{ for all } v \in M_K\smallsetminus S\}.
    \]
In particular, $\mathfrak o_K = \mathfrak o_{K, S}$ when $S = M_K^\infty$.  
Moreover, we write $\mu_K$ (resp., $\mu_{K,d}$) for the set of roots of unity (resp., $d$th roots of unity) in $K$. Finally, for $c \in K$ and $d \ge 2$, we write
	\[
		f_{d,c}(x) := x^d + c.
	\]
We use the following variation of the standard {\it house} function for an algebraic number:

\begin{defin}
Let $K$ be a number field, and let $S$ be a finite set of places of $K$ containing $M_K^\infty$. Then the {\bf house} and {\bf $S$-house} of an element $\alpha \in K$ are
	\[
		\house(\alpha) := \max_{v\in M_K^\infty}|\alpha|_v \quad\text{and}\quad \houseS(\alpha) := \max_{v\in S} |\alpha|_v,\quad\text{respectively.}
	\]
\end{defin}

Note that if $\alpha\in\mathfrak o_{K,S}$, then
	\begin{equation}\label{eq:house/height}
		\frac1{[K:\Q]}\log \houseS(\alpha) \le h(\alpha) \le |S| \cdot \log \houseS(\alpha).
	\end{equation}

Here $h(\alpha)$ denotes the absolute logarithmic Weil height of $\alpha$; see \cite[\S3.1]{SilvDyn} for a definition. The following is a slight variation on a refinement of Kronecker's theorem due to Schinzel and Zassenhaus.
\begin{lem}\label{lem:SchinzelZassenhaus}
Fix $t \ge 1$. For every number field $K$ of absolute degree $t$ and every nonzero $\alpha \in K$ that is not a root of unity, we have that 
	\[
		\max_{v\in M_K} |\alpha|_v > 1 + 2^{-(t+4)}.
	\]
\end{lem}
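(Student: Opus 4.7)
The plan is to split the proof into two cases depending on whether $\alpha$ lies in $\mathfrak{o}_K$.

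\emph{Non-integer case.} Suppose first that $\alpha \notin \mathfrak{o}_K$, so that some non-archimedean place $v$ of $K$, lying over a rational prime $p$, satisfies $v(\alpha)\le -1$ (with $v$ normalized so that $v(\pi)=1$ for a uniformizer $\pi$). In the Weil-height normalization $|p|_v = 1/p$, one has
\[
|\alpha|_v \;=\; p^{-v(\alpha)/e_v} \;\ge\; p^{1/e_v} \;\ge\; 2^{1/t},
\]
since the ramification index satisfies $e_v \le [K:\mathbb{Q}] = t$. An elementary estimate based on $2^{1/t} \ge 1 + (\log 2)/t$, together with the inequality $t\cdot 2^{-(t+4)} \le 1/32 < \log 2$ (valid for all $t \ge 1$), then gives $|\alpha|_v > 1 + 2^{-(t+4)}$.

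\emph{Integer case.} Suppose now that $\alpha \in \mathfrak{o}_K$, so that $\alpha$ is a nonzero algebraic integer that is not a root of unity. Set $n := [\mathbb{Q}(\alpha):\mathbb{Q}] \le t$, and let $2s$ denote the number of non-real Galois conjugates of $\alpha$; in particular, $2s \le n$. The classical Schinzel--Zassenhaus theorem then furnishes
\[
\max_{\sigma\colon \mathbb{Q}(\alpha)\hookrightarrow\mathbb{C}}|\sigma(\alpha)| \;>\; 1 + 4^{-s-2} \;\ge\; 1 + 2^{-n-4} \;\ge\; 1 + 2^{-(t+4)}.
\]
Because every embedding $K\hookrightarrow\mathbb{C}$ restricts to an embedding of $\mathbb{Q}(\alpha)\hookrightarrow\mathbb{C}$, and every embedding of $\mathbb{Q}(\alpha)$ extends to $K$, the maximum on the left equals $\max_{v\in M_K^\infty}|\alpha|_v$, which is itself a lower bound for $\max_{v\in M_K}|\alpha|_v$. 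This completes the proof.

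I do not foresee any genuine obstacle here: the archimedean case is precisely the content of Schinzel--Zassenhaus, while the non-archimedean case just records that a non-integer has a uniformly large absolute value at a suitable finite place, with the lower bound controlled by the ramification index (hence by $t$). The explicit constant $2^{-(t+4)}$ in the statement is not arbitrary—it is exactly what drops out of Schinzel--Zassenhaus once one applies the bounds $s \le n/2$ and $n \le t$, and it is also (comfortably) dominated by the bound $2^{1/t}-1$ coming from the non-archimedean side.
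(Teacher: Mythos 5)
Your proof is correct and follows essentially the same route as the paper: split into the cases $\alpha \in \mathfrak o_K$ and $\alpha \notin \mathfrak o_K$, invoke the Schinzel--Zassenhaus refinement of Kronecker's theorem in the integral case, and use $|\alpha|_v \ge p^{1/e_v} \ge 2^{1/t} > 1 + 2^{-(t+4)}$ at a nonarchimedean place otherwise. The only difference is that you spell out the elementary estimates (deriving $2^{-(t+4)}$ from the original $4^{-s-2}$ constant via $2s \le n \le t$, and checking $2^{1/t} > 1+2^{-(t+4)}$), which the paper simply asserts.
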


\begin{proof}
Let $K$ be a number field with $[K:\Q] = t$. If $\alpha \in \mathfrak o_K$ is nonzero and not a root of unity, then we have from \cite[Theorem 1]{schinzel1965refinement} that
	\[
		\max_{v\in M_K^\infty} |\alpha|_v > 1 + 2^{-(t+4)}.
	\]
If $\alpha \notin \mathfrak o_K$, then $|\alpha|_v > 1$ for some nonarchimedean $v \in M_K$. Letting $p$ be the residue characteristic of $v$ and $e_v$ the ramification index of $v$ over $p$, the fact that $|\alpha|_v > 1$ implies that
    \[
        |\alpha|_v \ge p^{1/e_v} \ge 2^{1/t} > 1 + 2^{-(t+4)}.\qedhere
    \]
\end{proof}

For $\alpha \in \mathfrak o_{K,S}$, there is a place $v\in S$ that maximizes $|\alpha|_v$, so the following is an immediate consequence of Lemma~\ref{lem:SchinzelZassenhaus}:

\begin{cor}\label{cor:SchinzelZassenhaus}
Fix $t \ge 1$. For any number field $K$ of absolute degree $t$ and any finite set of places $S \subset M_K$ containing $M_K^\infty$, we have that 
	\[
    		\houseS(\alpha) > 1 + 2^{-(t+4)}
	\]
holds whenever $\alpha \in \mathfrak o_{K,S}$ is nonzero and not a root of unity.
\end{cor}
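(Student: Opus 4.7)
The plan is to observe that this corollary follows almost immediately from Lemma~\ref{lem:SchinzelZassenhaus} together with the defining property of $S$-integers. Let $\alpha\in\mathfrak{o}_{K,S}$ be nonzero and not a root of unity. By Lemma~\ref{lem:SchinzelZassenhaus}, there exists some place $v_0\in M_K$ (not a priori in $S$) such that $|\alpha|_{v_0} > 1 + 2^{-(t+4)}$. In particular $|\alpha|_{v_0} > 1$.

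The key step is to argue that $v_0$ must in fact lie in $S$. This is precisely where the $S$-integrality hypothesis is used: by the definition of $\mathfrak{o}_{K,S}$, we have $|\alpha|_v \leq 1$ for every $v \in M_K \smallsetminus S$, so the inequality $|\alpha|_{v_0} > 1$ forces $v_0 \in S$. Consequently,
\[
\houseS(\alpha) = \max_{v\in S}|\alpha|_v \geq |\alpha|_{v_0} > 1 + 2^{-(t+4)},
\]
as required.

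There is no real obstacle here; the corollary is essentially a repackaging of the lemma. The only point worth stating carefully is the implication ``$|\alpha|_{v_0} > 1 \Rightarrow v_0 \in S$,'' which is exactly the content of $S$-integrality. Since the lemma already handles both the archimedean case (via Schinzel--Zassenhaus) and the nonarchimedean case (via ramification bounds) uniformly to produce the threshold $1 + 2^{-(t+4)}$, no further case analysis is needed in the corollary itself.
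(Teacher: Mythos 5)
Your proof is correct and follows essentially the same route as the paper: the paper likewise notes that for an $S$-integer the place maximizing $|\alpha|_v$ over $M_K$ must lie in $S$ (since $|\alpha|_v\le 1$ off $S$), so the corollary is an immediate consequence of Lemma~\ref{lem:SchinzelZassenhaus}. Nothing is missing.
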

The proof of Theorem \ref{thm:prep+int} follows that of \cite[Theorem 1.4]{PreperiodicPointsandABC} except for one aspect: here we use Baker's method on linear forms in logarithms to bound absolute values $|\alpha^d-\beta^d|_v$ for all suitable $S$-integers $\alpha$ and $\beta$ and all $d$ sufficiently large, while in \cite{PreperiodicPointsandABC}, we obtained bounds on the height $h(\alpha^d-\beta^d)$ for all suitable $\alpha,\beta\in K$ and all $d$ sufficiently large via the abc-conjecture. In particular, we use the following form of Baker's method in \cite[Proposition 3.12]{MR3066441}.
\begin{prop}\label{prop:linear+forms+in+logs} 
Let $K$ be a number field of degree $t$, let $n\geq1$, let $\alpha_1\cdots,\alpha_n\in K$ be nonzero elements of $K$, and let $b_1,\dots,b_n$ be rational integers, not all zero. Moreover, assume that $\alpha_1^{b_1}\cdots \alpha_n^{b_n}-1\neq 0$. Then there are positive constants $c_1(t,n)$ and $V(t)$ such that 
\[\log|\alpha_1^{b_1}\cdots \alpha_n^{b_n}-1|_v>-c_1(t,n)\frac{N(v)}{\log N(v)}\bigg(\prod_{i=1}^n\max\{h(\alpha_i), V(t)\}\bigg)\log(\max\{|b_1|,\dots,|b_n|,3\}),\]
for any place $v$ of $K$, where $N(v)=2$ when $|\cdot|_v$ is archimedean and $N(v)=|\mathfrak{o}_K/\mathfrak{p}\mathfrak{o}_K|$ when $v$ corresponds to the prime ideal $\mathfrak{p}$ of $K$.    
\end{prop}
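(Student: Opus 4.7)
The statement is a unified archimedean and non-archimedean instance of \emph{Baker's theorem on linear forms in logarithms}, whose full proof is the core content of the transcendence-theoretic works of Baker, Baker--W\"ustholz, Matveev, and Yu Kunrui. My plan is therefore not to reprove the theorem from scratch but to reduce it to the formulation in the cited reference, while sketching what the underlying argument looks like in case an independent derivation were required.

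First I would split the statement into two cases according as $v$ is archimedean or not. In the archimedean case $N(v)=2$, and after choosing a complex embedding $\sigma\colon K\hookrightarrow\C$ extending $|\cdot|_v$, the quantity $\log|\alpha_1^{b_1}\cdots \alpha_n^{b_n}-1|_v$ is, up to a harmless factor, the logarithm of an expression $|e^{\Lambda}-1|$ where $\Lambda=b_1\log\sigma(\alpha_1)+\cdots+b_n\log\sigma(\alpha_n)+2\pi i\, b_0$ for an integer $b_0$ of controlled size. A lower bound of exactly the required shape (a product of heights together with a single logarithmic factor in $\max|b_i|$, with the cutoff $V(t)$ preventing arbitrarily small heights that would clash with Lehmer-type phenomena) is then Matveev's sharpening of Baker--W\"ustholz applied inside $K$.

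Next I would handle the non-archimedean case. If $v$ corresponds to a prime $\fp$ of residue characteristic $p$, the same quantity is essentially $-v_{\fp}(\alpha_1^{b_1}\cdots\alpha_n^{b_n}-1)\log N(v)$ up to normalization, and an upper bound of the required form---with the factor $N(v)/\log N(v)$ packaging the residue degree and the residue characteristic---is Yu Kunrui's $\fp$-adic analogue of Baker--W\"ustholz. Both of these deep theorems are proved by the same broad strategy: assume the linear form is too small; construct, via the Thue--Siegel lemma, an auxiliary function with many zeros of high order at rational integer points; use the smallness hypothesis to extrapolate these zeros further; and apply a zero estimate (Philippon--Waldschmidt archimedeanly, Yu $\fp$-adically) to force a contradiction.

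The hard part, were one to attempt an independent derivation, is undeniably this auxiliary function construction with \emph{explicit} and \emph{uniform} control of the constants $c_1(t,n)$ and $V(t)$ as stated: they must depend on $t$ and $n$ alone, independently of $S$ and of the particular place $v$, which is precisely what the cited theorems accomplish. My proposal is therefore to invoke \cite[Proposition~3.12]{MR3066441} as a black box, after verifying the two minor compatibility points needed to apply it in our setting: that the sole non-degeneracy hypothesis $\alpha_1^{b_1}\cdots \alpha_n^{b_n}\neq 1$ is exactly the hypothesis of the cited bound (no multiplicative independence of the $\alpha_i$ is required), and that the normalizations of $|\cdot|_v$ and of the Weil height $h$ used there agree with the conventions of Section~\ref{sec:preper}.
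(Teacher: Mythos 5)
Your proposal matches the paper exactly: the paper does not prove this proposition but simply quotes it as \cite[Proposition 3.12]{MR3066441}, a known consequence of Baker's method (and its $\fp$-adic analogue), which is precisely the black-box invocation you make. Your additional sketch of the Matveev/Yu machinery and the compatibility checks on normalizations are fine but not required beyond the citation.
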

We now have the tools in place to prove Theorem \ref{thm:prep+int} from the Introduction.  
\begin{rem}
In the following proof, as well as in the next section, we will use the fact that the size of $S$ is bounded above by a constant depending only on $q$ and $t$ (using the notation of Theorem~\ref{thm:prep+int}). Indeed, if $S$ consists only of the infinite places, then $|S| \le [K:\Q] = t$; otherwise, if $p_1 < \cdots < p_n = q$ are the residue characteristics of the finite places of $S$, then there are at most $t = [K:\Q]$ places over each $p_i$, hence
    \[
        |S| \le t\left(n + 1\right) \le t\left(\pi(q) + 1\right),
    \]
where $\pi(\cdot)$ is the rational prime counting function.
\end{rem}
\begin{proof}[(Proof of Theorem \ref{thm:prep+int})] Let $K$ be a number field of absolute degree $t$, let $S$ be a finite set of places of $K$ containing the archimedean ones, and let $q$ be the maximal residue characteristic among all places in $S$. Let $\kappa = \kappa(t)$ be the minimum height of all points of degree at most $t$ over $\Q$ and of height larger than $\log(3)$; that is,
\[\kappa:=\min\{h(z)\,:\, [\Q(z) : \Q] \le t\; \text{and}\;h(z)>\log(3)\}.\]
That $\kappa$ is well-defined and satisfies $\kappa>\log(3)$ follows from Northcott's Theorem; see \cite[\S3.1]{SilvDyn}.

Let $\rho_d$ be the unique positive real root of $x^d-2x-1$.
Since the sequence $(\rho_d^d)_{d\ge2}$ is decreasing with limit $3$ by \cite[Lemma 3.2]{PreperiodicPointsandABC}, there exists $d_1 = d_1(t)$ depending only on $t$ such that $\log(\rho_d^d)<\kappa$ for all $d\geq d_1$. Then \cite[Corollary 3.4]{PreperiodicPointsandABC} implies that    
\[h(\alpha)\geq \frac{1}{d}h(c)-\log(\rho_d)>0\]
for all $\alpha\in\PrePer(x^d+c,K)$ whenever $h(c)>\log(3)$ and $d\geq d_1$. 
With this in mind, we will assume from here on that $c \in \mathfrak o_{K,S}$ satisfies $h(c)>\log(3)$ and that $d\ge d_1(t)$, so that all $K$-rational preperiodic points for $f_{d,c}$ have positive height.

Now suppose that $\PrePer(f_{d,c},K)$ is non-empty, and choose $\alpha\in\PrePer(f_{d,c},K)$ so that 
\[\houseS(\alpha)=\max\bigg\{\,\houseS(\beta)\,:\,\beta\in\PrePer(f_{d,c},K)\bigg\}.\]
We have $\alpha \in \mathfrak o_{K,S}$ since $c \in \mathfrak o_{K,S}$, so it follows from Corollary~\ref{cor:SchinzelZassenhaus} that
\begin{equation}\label{int1}
\houseS(\alpha)>1+ 2^{-(t+4)}.
\end{equation}
Moreover, \cite[Corollary 2]{PaulVoutier1996} says that for all $z \in K$ with positive height we have
\begin{equation}\label{eq:C(t)}
h(z) \ge V(t):=
\begin{cases}
      \log(2) & \text{if}\; t=1, \\[5pt] 
      2/(t(\log3t)^3) & \text{if}\; t\geq2.
\end{cases}
\end{equation}

Now fix a place $v \in S$ such that $|\alpha|_v = \houseS(\alpha)$. From here, let $\beta\in\PrePer(f_{d,c},K)$ be arbitrary and suppose that $f_{d,c}(\alpha)\neq f_{d,c}(\beta)$, i.e., that the ratio $\beta/\alpha$ is not a $d$th root of unity in $K$.
Then
	\begin{equation}\label{eq:power_diff1}
	\begin{split}
		2|\alpha|_v = 2\houseS(\alpha)
			&\ge |f_{d,c}(\alpha)|_v + |f_{d,c}(\beta)|_v\\[5pt]
			&\ge |f_{d,c}(\alpha) - f_{d,c}(\beta)|_v\\[5pt]
			&= |\alpha^d - \beta^d|_v\\[5pt]
			&= |\alpha|_v^d \cdot |1 - (\beta/\alpha)^d|_v.
	\end{split}
	\end{equation}
Applying Baker's method on linear forms in logarithms in the form of Proposition \ref{prop:linear+forms+in+logs}, we see that there is a positive constant $C_1(q,t)$ depending only on $q$ and $t$ such that \vspace{.1cm}    
	\[
		\log |1-(\beta/\alpha)^d|_v
			> -C_1(q,t) \max\{h(\alpha/\beta), V(t)\} \log(d).
	\]
Combining this with some standard height inequalities---as well as \eqref{eq:house/height} and \eqref{eq:C(t)}---we get
	\begin{equation}\label{eq:power_diff2}
	\begin{split}
		\log |1-(\beta/\alpha)^d|_v
			&> -C_1(q,t)(h(\alpha)+h(\beta))\log(d)\\[5pt]
			&\ge -C_1(q,t)\cdot|S|\cdot\big(\log\houseS(\alpha) + \log\houseS(\beta)\big)\log(d)\\[5pt]
			&\ge -C_1(q,t)\cdot 2|S| \cdot \log\houseS(\alpha)\cdot\log(d)\\[5pt]
			&= -C_1(q,t)\cdot 2|S| \cdot \log|\alpha|_v\cdot\log(d).
	\end{split}
	\end{equation}
In particular, by combining \eqref{eq:power_diff1} and \eqref{eq:power_diff2}, we conclude that
	\[
		\log(2) > \log|\alpha|_v \cdot \big(d - 1 -2|S|C_1(q,t)\log(d)\big).
	\]
Since $\log|\alpha|_v = \log\houseS(\alpha) > \log\left(1+ 2^{-(t+4)}\right)$, and since $2|S|C_1(q,t)$ is a constant depending only on $q$ and $t$, this inequality implies that $d$ is bounded above by a constant $d_2(q,t)$ depending only on $q$ and $t$. In summary, we have shown there exists a constant  $d_3(q,t)=\max\{d_1(t),d_2(q,t)\}$ depending only on $q$ and $t$ such that if $d>d_3$ and $h(c)>\log 3$, then there is some $\alpha\in \PrePer(f_{d,c},K)$ such that $f_{d,c}(\alpha)=f_{d,c}(\beta)$ for all $\beta\in\PrePer(f_{d,c},K)$. In particular, applying this fact to $\alpha$ and $\beta=f_{d,c}(\alpha)$, we see that $f_{d,c}^2(\alpha)=f_{d,c}(\alpha)$. That is, $y:=f_{d,c}(\alpha)$ is a fixed point of $f_{d,c}$. Moreover, we have that   
\[f_{d,c}(\beta)=f_{d,c}(\alpha)=y=f_{d,c}(y)\;\;\text{for all}\; \beta\in\PrePer(f_{d,c},K).\]
Thus $\beta/y$ is a $d$th root of unity in $K$ for all $\beta\in\PrePer(f_{d,c},K)$. In particular, we deduce that if $d>d_3(q,t)$, if $h(c)>\log3$, and if $\PrePer(f_{d,c},K)\neq\varnothing$, then    
    \[c=y-y^d\;\;\;\text{and}\;\;\;\PrePer(f_{d,c},K)=\{\zeta y\,:\, \zeta\in\mu_{K,d}\}\]
for some unique $y\in\mathfrak{o}_{K,S}$ as claimed in statement (2). On the other hand, since $\log(\rho_d)\rightarrow0$ as $d\rightarrow\infty$, there is a constant $d_4(t)$ such that 
\begin{equation}\label{int3}
\frac{\log3}{d}+\log(\rho_d)<\min\{h(z)\,:\, [\Q(z) : \Q] \le t \;\text{and}\; h(z)>0\}
\end{equation}
holds for all $d\geq d_4(t)$; here Northcott's theorem implies that the minimum above exists and is strictly positive. In particular, since 
\begin{equation}\label{int4}
h(\alpha)\leq \frac{1}{d}h(c)+\log(\rho_d)\;\;\;\; \text{for all}\;\alpha\in\PrePer(f_{d,c},K)
\end{equation}
by \cite[Corollary 3.4]{PreperiodicPointsandABC}, we see that $h(\alpha)=0$ for all $\alpha\in\PrePer(f_{d,c},K)$ whenever $d\geq d_4(t)$ and $h(c)\leq \log(3)$ by combining \eqref{int3} and \eqref{int4}. Hence, after setting $D(q,t):=\max\{d_3(q,t),d_4(t)\}$, we obtain statements (2) and (3) of Theorem \ref{thm:prep+int}, simultaneously for all $d\geq D(q,t)$ as claimed. Finally, statement (1) follows from statements (2) and (3) together with \cite[Corollary 3.8]{PreperiodicPointsandABC}, exactly as in \cite{PreperiodicPointsandABC}: the argument precluding cycles of length strictly larger than $3$ given on \cite[p. 16]{PreperiodicPointsandABC} holds verbatim.  
\end{proof}
Combining Theorem \ref{thm:prep+int} with results in \cite{MR2339471,Ingram}, we obtain a bound on $|\PrePer(x^d+c,K)|$ that depends only on the denominator of $c$ and the degree of $K$, independent of $d$. 
\begin{proof}[(Proof of Corollary \ref{cor:prep+int})] Since the maps $f_{d,c}=x^d+c$ for $c\in\mathfrak{o}_{K,S}$ have good reduction at all places outside $S$, it follows from the main theorem of \cite{MR2339471} that for all $d\geq2$ there exists a constant $\kappa(d,t,S)$ depending only on $d$, $t$, and $|S|$ such that 
\begin{equation}\label{cor:1}
\Big|\PrePer(x^d+c,K)\Big|\leq \kappa(d,t,|S|).
\end{equation} 
Since $q$ and $t$ together yield an upper bound on $|S|$, we may replace $\kappa(d,t,|S|)$ with $\kappa(d,t,q)$.
In particular, we deduce from Theorem \ref{thm:prep+int} and \eqref{cor:1} that 
\[\Big|\PrePer(x^d+c,K)\Big|\leq\max\Big\{\max\big\{\kappa(d,t,q): d\leq D(q,t)\big\},\; |\mu_{K}|+1\Big\}.\]
Since $|\mu_K|$ is bounded by a constant depending only on $t$, the claim follows.  
\end{proof}
Similarly, we may use Theorem \ref{thm:prep+int} to restrict the semigroups generated by unicritical maps under composition that can have a finite (semigroup) orbit points: 

\begin{cor}\label{cor:semigroup+int+finite+orbit} Let $K$ be a number field, let $S$ be a finite set of places of $K$ containing the archimedean places, and let $\mathfrak{o}_{K,S}$ be the ring of $S$-integers in $K$. Moreover, assume that $c_1,\dots,c_s\in\mathfrak{o}_{K,S}$, that $d_1,\dots,d_s\geq2$, and that there exists a finite orbit point $\alpha\in K$ for the semigroup $G=\langle x^{d_1}+c_1,\dots,x^{d_s}+c_s\rangle$. Then there is a constant $B(q,t)$, depending only on $t = [K:\Q]$ and the largest rational prime $q$ dividing a finite place in $S$ such that the number of generators of $G$ is at most $B(q,t)\cdot|\{d_1,\dots,d_s\}|$. In particular, if $G$ is generated by maps of equal degree, then $G$ has at most $B(q,t)$ generators.   
\end{cor}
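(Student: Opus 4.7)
The plan is to reduce the statement to Corollary~\ref{cor:prep+int} via the observation that a finite $G$-orbit is automatically forward-invariant under every generator of $G$, and is therefore contained in the preperiodic set of each individual generator.

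First I would group the generators by degree: let $e_1,\dots,e_k$ be the distinct values occurring among $d_1,\dots,d_s$, so that $k=|\{d_1,\dots,d_s\}|$, and let $I_j:=\{\,i : d_i=e_j\,\}$. The goal is then to prove $|I_j|\leq B(q,t)$ for each $j$, since the sum $s=\sum_{j=1}^{k}|I_j|\leq B(q,t)\cdot k$ gives the desired inequality, and the ``equal degree'' conclusion is just the case $k=1$.

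Next I would observe that $O_G(\alpha)$ is forward-invariant under every generator: if $\beta=g(\alpha)\in O_G(\alpha)$ for some $g\in G$, then $f_{d_i,c_i}(\beta)=(f_{d_i,c_i}\circ g)(\alpha)\in O_G(\alpha)$. Since $O_G(\alpha)$ is finite by hypothesis, every one of its elements must be preperiodic under every generator $f_{d_i,c_i}$. Fixing any $j$ and any reference index $i_0\in I_j$, this yields the inclusion $O_G(\alpha)\subseteq\PrePer(f_{e_j,c_{i_0}},K)$; because $c_{i_0}\in\mathfrak{o}_{K,S}$ and $e_j\geq 2$, Corollary~\ref{cor:prep+int} gives $|O_G(\alpha)|\leq B(q,t)$.

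Finally, for each $i\in I_j$ the image $f_{e_j,c_i}(\alpha)=\alpha^{e_j}+c_i$ lies in $O_G(\alpha)$, and distinct indices in $I_j$ produce distinct values (the constants $c_i$ attached to a common degree must be distinct, or else the corresponding generators would be identical polynomials). Hence $|I_j|\leq|O_G(\alpha)|\leq B(q,t)$, completing the proof. There is no substantive obstacle here: the only content beyond Corollary~\ref{cor:prep+int} itself is the forward-invariance observation together with the bookkeeping that packages all same-degree generators inside a single preperiodic set.
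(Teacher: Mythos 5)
Your proof is correct and takes essentially the same approach as the paper: both arguments use forward-invariance of the finite orbit to place the images $\alpha^{d_i}+c_i$ inside the preperiodic set of a reference generator (the paper fixes the single map $\phi_1$ once, while you pick a reference map in each degree class and pass through $|O_G(\alpha)|$), then apply Corollary~\ref{cor:prep+int} together with the injectivity of $c\mapsto\alpha^{d}+c$ among same-degree generators with distinct constants. No gaps.
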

\begin{proof} Suppose that $G=\langle x^{d_1}+c_1,\dots,x^{d_s}+c_s\rangle$ for some $c_1,\dots,c_s\in\mathfrak{o}_{K,S}$ and some $d_1,\dots,d_s\geq2$ and assume that $\alpha\in K$ has finite $G$-orbit. Likewise, for each $1\leq i\leq s$ let $\phi_i(x)=x^{d_i}+c_i$ be the given generators of $G$. In particular, we may assume that $\phi_i\neq \phi_j$ for all $i\neq j$, since otherwise we may discard repeated maps. Now let $G_1=\{\phi_1,\dots,\phi_s\}$ and let $d_{i_1},\dots,d_{i_n}$ be a list of the \emph{distinct} degrees of the maps in $G_1$. Then $\phi_i(\alpha)=\alpha^{d_i}+c_i\in\PrePer(\phi_1,K)$ for all $1\leq i\leq s$. In particular, if we group maps in $G_1$ by degree and then solve for the coefficients $c_i$ in each group, then we see that \vspace{.1cm}  
\begin{equation*}
\begin{split} 
|G_1|&= \big|\{\phi\in G_1\,:\, \deg(\phi)=d_{i_1}\}\big|+\dots+\big|\{\phi\in G_1\,:\, \deg(\phi)=d_{i_n}\}\big|\\[5pt] 
&\leq \big|\big\{\beta-\alpha^{d_{i_1}}\,:\,\beta\in\PrePer(\phi_1,K)\big\}\big|+\dots+\big|\big\{\beta-\alpha^{d_{i_n}}\,:\,\beta\in\PrePer(\phi_1,K)\big\}\big|\\[5pt]
&=\sum_{j=1}^n|\PrePer(\phi_1,K)|=|\{d_1,\dots,d_s\}|\cdot |\PrePer(\phi_1,K)|.    
\end{split} 
\end{equation*}
On the other hand, Corollary \ref{cor:prep+int} implies that $|\PrePer(\phi_1,K)|\leq B(q,t)$ for some constant $B(q,t)$ depending only on the degree $t$ of $K$ and the largest rational prime $q$ dividing a finite place of $S$. The claim follows. 
\end{proof}
Finally, we deduce a bound on the number of generators of $G=\langle x^d+c_1,\dots,x^d+s\rangle$ for which the orbit of zero contains a finite orbit point; here the bound depends only on the degree of the number field $\mathbb{Q}(c_1,\dots,c_s)$. 
\begin{proof}[(Proof of Corollary \ref{cor:orbit+zero+contains+finite})] Let $G=\langle x^{d}+c_1,\dots,x^{d}+c_s \rangle$ for some $c_1,\dots, c_s\in\overline{\mathbb{Q}}$ and some $d\geq2$ and let $K=\mathbb{Q}(c_1,\dots,c_s)$. Then, if the $G$-orbit of $0$ contains a finite $G$-orbit point, we have that $c_1,\dots,c_s\in\mathfrak{o}_K$ by \cite[Corollary 3.3]{MR4680482}. Moreover, since the entire $G$-orbit of $0$ is contained in $K$, it must be the case that there is some $\alpha\in \mathfrak o_K$ with a finite $G$-orbit. In particular, it follows from Corollary \ref{cor:semigroup+int+finite+orbit} that the number of generators of $G$ is bounded above by a constant depending only on the degree of $K$ as claimed.\end{proof}
\begin{rem} A similar result holds for semigroups generated by maps with possibly distinct degrees: if the orbit of $0$ contains a finite $G$-orbit point for $G=\langle x^{d_1}+c_1,\dots,x^{d_s}+c_s\rangle$, then $s\leq |\{d_1,\dots,d_s\}|\cdot s(t)$. However, we have focused on the equal degree case since this is the most commonly studied case from a Galois theory perspective \cite{MR4680482,PreperiodicPointsandABC}.        
\end{rem}

\section{Irreducible polynomials in semigroups}\label{sec:irre}
We next apply our results on preperiodic points to construct irreducible polynomials in semigroups. Roughly speaking, the link between these concepts comes from the following idea: if $g_1\in K[x]$ is irreducible and $g_1\circ g_2$ is reducible for some $g_2(x)=x^d+a\in K[x]$, then $g_1(a)=ry^p$ for some small $r$, some $y\in K$, and some prime $p|d$; see Theorem \ref{thm:irreducibility+test} below. In particular, applying this fact to $g_1=f^N$ for some $f(x)=x^d+c$ and some large $N$, we obtain a solution $(X,Y)=(f^{N-1}(a),y)$ to the Fermat-Catalan equation $X^d+c=rY^p$. Moreover, since $p|d$, we can apply Proposition \ref{prop:linear+forms+in+logs} to obtain a bound $|f^{N-1}(a)|_v=|X|_v\leq B(c,v)$ for all places $v\in M_K$. However, by enlarging $N$ if need be, we see that $a$ must be preperiodic for $f$ and $f^N(a)=ry^p$ must be periodic for $f$. To make this sketch precise, we start with the following link between reducibility and powers; see \cite[Theorem 5.1]{PreperiodicPointsandABC} for a proof. 
\begin{thm}\label{thm:irreducibility+test} Let $K$ be a field of characteristic zero, let $f,g\in K[x]$ with $f(x)=x^d+c$ for some $d\geq2$, and assume that $g$ is monic and irreducible in $K[x]$. If $g\circ f$ is reducible in $K[x]$, then we have that  
\[g(f(0))=(-1)^{e_1}4^{e_2} y^p\] 
for some $e_1,e_2\in\{0,1\}$, some $y \in K$, and some prime $p|d$. 
\end{thm}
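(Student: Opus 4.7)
My plan is to reduce the statement to Capelli's theorem on radical polynomials $x^d-a$. Let $\alpha \in \overline{K}$ be a root of $g$, set $n = \deg g$, and write $g(x) = \prod_{i=1}^n (x-\alpha_i)$ over $\overline{K}$. Then
\[
  g(f(x)) = \prod_{i=1}^n \bigl(x^d - (\alpha_i - c)\bigr),
\]
and a standard degree comparison shows that $g \circ f$ is irreducible over $K$ if and only if the single factor $x^d - (\alpha - c)$ is irreducible over $K(\alpha)$. Indeed, for any root $\eta$ of $x^d - (\alpha-c)$ one has $\alpha = \eta^d + c \in K(\eta)$, so $K(\alpha,\eta) = K(\eta)$, and by the tower law $[K(\eta):K] = [K(\alpha,\eta):K(\alpha)]\cdot n$; this equals $dn = \deg(g\circ f)$ precisely when $x^d - (\alpha-c)$ is the minimal polynomial of $\eta$ over $K(\alpha)$.

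With this reduction in hand, Capelli's theorem (valid in any field of characteristic zero) implies that reducibility of $x^d-(\alpha-c)$ over $K(\alpha)$ forces either $\alpha - c = \beta^p$ for some prime $p \mid d$ and some $\beta \in K(\alpha)$, or else $4 \mid d$ together with $-4(\alpha-c) = \delta^4$ for some $\delta \in K(\alpha)$. I then descend from $K(\alpha)$ to $K$ using the norm identity
\[
  N_{K(\alpha)/K}(\alpha - c) \;=\; \prod_{i=1}^n (\alpha_i - c) \;=\; (-1)^n g(c).
\]
In the first case, taking norms produces $g(c) = (-1)^n N(\beta)^p$; writing $y := N(\beta) \in K$, for odd $p$ the sign is absorbed into $y$, while for $p=2$ one chooses $e_1 \in \{0,1\}$ to match the parity of $n$. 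In the second case the norm yields $g(c)\cdot 4^n = N(\delta)^4$, from which (since $\Char(K)=0$) $g(c) = (N(\delta)^2/2^n)^2$ is a square in $K$, accounted for by $p = 2$ with $e_1 = e_2 = 0$. In every situation $g(c)$ has the asserted form $(-1)^{e_1}4^{e_2}y^p$ with $p \mid d$ prime; the factor $4^{e_2}$ in the statement simply provides flexibility for less streamlined choices of $y$.

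I expect the main obstacle to be Capelli's theorem itself---an elementary but nontrivial result, typically proved by induction on the number of prime factors of $d$, whose exceptional clause $4 \mid d$ ultimately rests on the Sophie Germain identity $x^4 + 4y^4 = (x^2 + 2xy + 2y^2)(x^2 - 2xy + 2y^2)$. Everything else is bookkeeping: the degree comparison reducing the problem to $K(\alpha)$ and the norm computation descending back to $K$ are both routine, once one is careful with signs and powers of $2$.
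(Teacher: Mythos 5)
Your proof is correct: Capelli's lemma (your degree comparison) reduces reducibility of $g\circ f$ to reducibility of $x^d-(\alpha-c)$ over $K(\alpha)$, the Vahlen--Capelli criterion gives the two cases, and the norm identity $N_{K(\alpha)/K}(\alpha-c)=(-1)^{\deg g}g(c)$ descends each case to the stated form $g(f(0))=(-1)^{e_1}4^{e_2}y^p$ (your computation even gives $e_2=0$ in both cases, which the statement allows). The paper itself offers no proof here---it cites Theorem 5.1 of the authors' earlier work \cite{PreperiodicPointsandABC}, whose argument is this same Capelli-plus-norm computation---so your route is essentially the intended one.
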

As in \cite{PreperiodicPointsandABC}, we show that under suitable conditions, $f(x)=x^d+c$ is stable over $K$ (i.e., that $f^n$ is irreducible over $K$ for all $n\geq1$) if and only if $f$ is irreducible over $K$. On the other hand, unlike \cite[Proposition 5.3]{PreperiodicPointsandABC}, which is contingent on the $abc$-conjecture, we assume here that $d$ has no small prime divisors (where ``small" depends on the denominator of $c$).    
\begin{thm}\label{thm:stable} Let $K$ be a number field, let $S$ be a finite set of places of $K$ containing the archimedean ones, and let $f_{d,c}=x^d+c$ for some $c\in\mathfrak{o}_{K,S}$ with $h(c)>0$. Then there is a constant $M_1(q,t)$, depending only on $t = [K:\Q]$ and the largest rational prime $q$ dividing a finite place in $S$, such that if every prime divisor $p|d$ satisfies $p\geq M_1(q,t)$, then $f_{d,c}$ is irreducible over $K$ if and only if $f_{d,c}$ is stable over $K$.      
\end{thm}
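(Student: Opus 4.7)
The plan is to follow \cite[Proposition 5.3]{PreperiodicPointsandABC} verbatim, replacing the use of the $abc$-conjecture with Proposition~\ref{prop:linear+forms+in+logs}; the hypothesis that every prime $p \mid d$ be large is precisely what is needed to drive the Baker estimate. One direction is trivial: stability implies irreducibility. For the converse, suppose $f := f_{d,c}$ is irreducible but not stable, and let $n_0 \geq 2$ be minimal with $f^{n_0}$ reducible. Then $f^{n_0-1}$ is irreducible and $f^{n_0} = f^{n_0-1} \circ f$ is reducible, so Theorem~\ref{thm:irreducibility+test} yields a prime $p \mid d$, a sign $r \in \{\pm 1, \pm 4\}$, and $y \in K$ with
\[
  X^d + c = r y^p, \qquad X := f^{n_0-1}(0) \in \mathfrak o_{K,S}.
\]
Requiring $M_1(q,t) \geq D(q,t)$ allows us to invoke Theorem~\ref{thm:prep+int} to rule out $X = 0$: this would force $0 \in \PrePer(f,K)$, hence $c = f(0)$ to be $0$ or a root of unity, both incompatible with $h(c) > 0$. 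A direct check at primes above $2$, valid once $M_1(q,t) \geq 2t$, similarly gives $y \in \mathfrak o_{K,S}$.

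\textbf{Baker estimate.} Exploit $p \mid d$ by setting $Z := X^{d/p} \in \mathfrak o_{K,S} \setminus \{0\}$, turning the equation into $r(y/Z)^p - 1 = c/Z^p$. Fix $v \in S$ with $|Z|_v = \houseS(Z)$ and apply Proposition~\ref{prop:linear+forms+in+logs} to $(\alpha_1, b_1) = (r,1)$ and $(\alpha_2, b_2) = (y/Z, p)$, which is valid because $c \neq 0$. Absorbing $h(r) \leq \log 4$, the bound $N(v)/\log N(v) = O_{q,t}(1)$, and $|S| = O_{q,t}(1)$ (as in the remark preceding the proof of Theorem~\ref{thm:prep+int}) into constants $C_i = C_i(q,t)$ yields
\[
  p \log|Z|_v - \log|c|_v \;\leq\; C_1 \max\{h(y/Z), V(t)\}\log p.
\]
From $r y^p = Z^p + c$ we obtain $h(y/Z) \leq 2 h(Z) + h(c)/p + O(1/p)$, and from \eqref{eq:house/height} we have $h(Z) \leq |S| \log|Z|_v$; combining these and rearranging gives
\[
  (p - 2 C_1 |S| \log p)\log|Z|_v \;\leq\; C_2(h(c)+1)\log p.
\]
Choosing $M_1(q,t)$ so that $p \geq M_1(q,t)$ implies $2 C_1 |S| \log p \leq p/2$, we conclude $\log|Z|_v \leq C_3(h(c)+1)\log p / p$. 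Passing to $\log|X|_v = (p/d)\log|Z|_v$ and applying \eqref{eq:house/height} once more yields
\[
  h(X) \;\leq\; C_4(q,t)(h(c)+1)\frac{\log d}{d}.
\]

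\textbf{Descent and contradiction.} If $n_0 = 2$ then $X = c$, and the bound above reduces to $h(c) \leq C_4(h(c)+1)\log d / d$, which for $d$ sufficiently large forces $h(c)$ below the minimum positive height of elements of $K$ of degree $\leq t$ (i.e.\ $V(t)$ from \eqref{eq:C(t)}), contradicting $h(c) > 0$. If $n_0 \geq 3$, iterate the elementary bound $h(f^k(0)) \geq d\cdot h(f^{k-1}(0)) - h(c) - \log 2$ downward from $k = n_0 - 1$; summing the resulting telescoping geometric series yields
\[
  h(c) = h(f(0)) \;\leq\; \frac{h(X)}{d^{n_0-2}} + \frac{h(c)+\log 2}{d-1} \;\leq\; \frac{C_5(q,t)(h(c)+1)}{d},
\]
which again contradicts $h(c) > 0$ for $d$ (and hence every $p \mid d$) sufficiently large. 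So no such $n_0$ exists and $f_{d,c}$ is stable.

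\textbf{Main obstacle.} The principal difficulty is uniformity of constants: every factor absorbed into $C_i(q,t)$---the contribution of $|S|$, the term $N(v)/\log N(v)$, and the threshold $V(t)$ in Proposition~\ref{prop:linear+forms+in+logs}---must depend only on $q$ and $t$, because the slack needed to push $2C_1|S|\log p$ below $p/2$ is what fixes the growth rate of $M_1(q,t)$. A secondary concern is ruling out the degenerate cases $X = 0$ and $y = 0$: both are neutralized only by invoking Theorem~\ref{thm:prep+int}, which is why $M_1(q,t)$ must be taken at least $D(q,t)$.
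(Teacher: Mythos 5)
Your argument is correct, and its core is the same as the paper's: reduce reducibility of an iterate to the equation $f^{n_0}(0)=ry^p$ with $p\mid d$ via Theorem~\ref{thm:irreducibility+test}, rewrite it as $r(y/x^{d/p})^p-1=c/x^d$, and apply Proposition~\ref{prop:linear+forms+in+logs}; the uniformity issues you flag (absorbing $|S|$, $N(v)/\log N(v)$, $V(t)$ into constants in $q,t$) are exactly the ones the paper handles. Where you diverge is in the supporting height bookkeeping. The paper works at a place $v$ maximizing $|c|_v$ and proves a dedicated estimate (Lemma~\ref{lem:orbitzero}) giving exponential growth $|f_{d,c}^{n}(c)|_v\ge\kappa_1|c|_v^{d^{n-1}(d-1)}$ together with $h(f_{d,c}^n(c))\le\kappa_2 d^n h(c)$; this simultaneously shows $x\ne 0$ (indeed $|x|_v>1$) and, combined with the Baker bound and the Schinzel--Zassenhaus estimate (Lemma~\ref{lem:SchinzelZassenhaus}), forces $d$ to be bounded. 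You instead work at the place maximizing $|X|_v$, extract the cleaner intermediate bound $h(X)\ll_{q,t}(h(c)+1)\frac{\log d}{d}$, and then descend along the orbit with the elementary inequality $h(f^k(0))\ge d\,h(f^{k-1}(0))-h(c)-\log 2$, contradicting the Voutier lower bound $V(t)$ on $h(c)$; this lets you dispense with Lemma~\ref{lem:orbitzero} entirely, at the cost of invoking Theorem~\ref{thm:prep+int} (hence requiring $M_1\ge D(q,t)$) to rule out the degenerate cases $X=0$ and $y=0$, which the paper gets for free from its orbit-growth lemma, keeping its proof of stability independent of Theorem~\ref{thm:prep+int}. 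Two small quibbles, neither fatal: your parenthetical that $y\in\mathfrak o_{K,S}$ follows once $M_1(q,t)\ge 2t$ is off (the check at places over $2$ needs roughly $p>2t^2$, or one enlarges $S$), but you never use the integrality of $y$; and the assertion that the degenerate cases are neutralized \emph{only} by Theorem~\ref{thm:prep+int} is not accurate, though your use of it is legitimate since it is proved earlier.
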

\begin{rem} In particular, if $c\in\mathfrak{o}_{K,S}$ is such that $h(c)>0$ and if $d$ is a prime number such that $d\geq M(q,t)$, then $f_{d,c}$ is irreducible over $K$ if and only if it is stable over $K$.\end{rem}
Before we give a proof of this result, we recall some basic properties of dynamical canonical heights. The canonical height function attached to a rational function $\phi(x) \in K(x)$ is the unique function $\hat{h}_\phi : \P^1(\Kbar) \to \R$ satisfying
    \[
        \hat{h}_\phi(\alpha) = h(\alpha) + O(1) \quad\text{and}\quad \hat{h}_\phi(\phi(\alpha)) = \deg\phi \cdot \hat{h}(\alpha)
    \]
for all $\alpha \in \P^1(\Kbar)$; see \cite[\textsection 3.4]{SilvDyn}. We will require the following basic facts about canonical heights for unicritical polynomials; see, for instance, \cite[Lemma 2.2]{MR4127857}.
\begin{lem}\label{lem:basic} Let $K$ be a number field and let $f_{d,c}(x)=x^d+c$ for some $d\geq2$ and $c\in K$. Then we have that 
\[|h(\alpha)-\hat{h}_{f_{d,c}}(\alpha)|\leq \frac{1}{(d-1)}(h(c)+\log 2)
\;\;\;\;\text{and}\;\;\;\;
\hat{h}_{f_{d,c}}(f_{d,c}^m(\alpha))=d^m\hat{h}_{f_{d,c}}(\alpha)
\]
for all $\alpha\in \overline{K}$ and $m\geq0$.
\end{lem}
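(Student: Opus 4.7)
The two assertions are classical properties of dynamical canonical heights, and my plan is to handle the scaling identity directly from the defining functional equation and to obtain the comparison bound via a short telescoping argument with explicit constants. The scaling identity $\hat{h}_{f_{d,c}}(f_{d,c}^m(\alpha)) = d^m \hat{h}_{f_{d,c}}(\alpha)$ is immediate by induction on $m \geq 0$: the case $m = 0$ is trivial, and the inductive step reduces to the $m = 1$ case $\hat{h}_{f_{d,c}}(f_{d,c}(\alpha)) = d\cdot \hat{h}_{f_{d,c}}(\alpha)$, which is part of the definition of the canonical height recalled above.

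For the comparison bound, the key intermediate step is the one-step estimate
\[
\bigl|h(f_{d,c}(\alpha)) - d\cdot h(\alpha)\bigr| \leq h(c) + \log 2 \quad \text{for all } \alpha \in \overline{K}.
\]
This follows from two applications of the elementary sum-of-heights inequality $h(x+y) \leq h(x) + h(y) + \log 2$ combined with the multiplicative identity $h(\alpha^d) = d\cdot h(\alpha)$: the upper bound is $h(\alpha^d + c) \leq h(\alpha^d) + h(c) + \log 2$, while the lower bound comes from writing $\alpha^d = f_{d,c}(\alpha) + (-c)$ and applying the same inequality in that direction, using $h(-c) = h(c)$.

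To conclude, I would invoke the standard limit description $\hat{h}_{f_{d,c}}(\alpha) = \lim_{n \to \infty} h(f_{d,c}^n(\alpha))/d^n$ from \cite[\textsection 3.4]{SilvDyn} to write the telescoping expression
\[
\hat{h}_{f_{d,c}}(\alpha) - h(\alpha) = \sum_{n=0}^{\infty} \frac{1}{d^{n+1}}\Bigl(h(f_{d,c}^{n+1}(\alpha)) - d\cdot h(f_{d,c}^n(\alpha))\Bigr),
\]
apply the one-step estimate to each summand with $\alpha$ replaced by $f_{d,c}^n(\alpha)$, and sum the resulting geometric series $\sum_{n \ge 0} d^{-(n+1)} = 1/(d-1)$ to obtain the stated bound $|\hat{h}_{f_{d,c}}(\alpha) - h(\alpha)| \leq (h(c) + \log 2)/(d-1)$. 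The argument is entirely routine; the only point demanding any care is bookkeeping the constant $\log 2$ in the sum-of-heights inequality, used once in each direction, so that the final bound takes the clean form claimed rather than an implicit $O(1)$ error.
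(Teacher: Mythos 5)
Your proof is correct, and it is the standard Tate telescoping argument: the paper itself gives no proof of this lemma but simply cites \cite[Lemma 2.2]{MR4127857}, which rests on exactly this one-step estimate $|h(f_{d,c}(\alpha)) - d\,h(\alpha)| \le h(c) + \log 2$ and the geometric-series bound $\sum_{n\ge 0} d^{-(n+1)} = 1/(d-1)$. Both your handling of the constant $\log 2$ and the derivation of the scaling identity from the defining functional equation are accurate, so nothing further is needed.
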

In addition, we need the following estimates. 
\begin{lem}\label{lem:orbitzero} Let $K$ be a number field and let $f_{d,c}=x^d+c$ for some $c\in K$ with $h(c)>0$. Moreover, assume that $v\in M_{K}$ is such that $|c|_v=\max_{w\in M_K} |c|_w$. Then there exist constants $\kappa_1(t)>1$ and $\kappa_2(t)>1$ depending only on $t=[K:\mathbb{Q}]$ such that  
\[\kappa_1|c|_v^{d^{n-1}(d-1)}\leq |f_{d,c}^n(c)|_v\;\;\;\text{and}\;\;\;\; h(f_{d,c}^n(c))\leq \kappa_2d^nh(c)\]
holds for all $n\geq1$ and all $d$ sufficiently large depending on $t$.
\end{lem}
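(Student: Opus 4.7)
\emph{Plan.} I would establish the two inequalities separately. The upper bound on $h(f_{d,c}^n(c))$ is the routine one: applying the standard height inequality $h(\alpha^d + c) \le d\, h(\alpha) + h(c) + \log 2$ at each stage and iterating yields
\[
h(f_{d,c}^n(c)) \le d^n h(c) + \frac{d^n - 1}{d - 1}\bigl(h(c) + \log 2\bigr) \le d^n\bigl(2 h(c) + \log 2\bigr),
\]
where the last step uses $(d^n-1)/(d-1) \le d^n$ for $d \ge 2$. Since $h(c) > 0$ and $c$ has degree at most $t$ over $\Q$, the Voutier bound \eqref{eq:C(t)} gives $h(c) \ge V(t) > 0$, hence $\log 2 \le (\log 2/V(t))\, h(c)$, and the claim follows with $\kappa_2(t) := 2 + \log 2/V(t) > 1$.

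For the lower bound on $|f_{d,c}^n(c)|_v$, the crucial input is Lemma~\ref{lem:SchinzelZassenhaus}: since $h(c) > 0$, the element $c$ is nonzero and not a root of unity, so by hypothesis $|c|_v = \max_{w\in M_K}|c|_w > 1 + 2^{-(t+4)}$. Setting $\eta := 2^{-(t+4)}$ and $\kappa_1(t) := 1 + \eta/2 > 1$, I would prove $|f_{d,c}^n(c)|_v \ge \kappa_1 |c|_v^{d^{n-1}(d-1)}$ by induction on $n$. If $v$ is non-archimedean, the ultrametric inequality together with $|c|_v > 1$ yields $|f_{d,c}^n(c)|_v = |c|_v^{d^n}$ for every $n \ge 1$, and this exceeds $\kappa_1 |c|_v^{d^{n-1}(d-1)}$ since $|c|_v > \kappa_1$. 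If $v$ is archimedean, I use at each step the reverse triangle inequality $|f_{d,c}^n(c)|_v \ge |f_{d,c}^{n-1}(c)|_v^{d} - |c|_v$. The base case $n=1$ reduces to $|c|_v - |c|_v^{2-d} \ge \kappa_1$, which holds as soon as $|c|_v^{d-2} \ge 2/\eta = 2^{t+5}$; since $|c|_v \ge 1+\eta$, this is guaranteed once $d \ge 2 + (t+5)\log 2/\log(1+\eta)$, a threshold depending only on $t$. For the inductive step with $n \ge 2$, after dividing the desired inequality by $|c|_v^{d^{n-1}(d-1)}$ one reduces to $\kappa_1^d - \kappa_1 \ge |c|_v^{1-d^{n-1}(d-1)}$; the right-hand side is bounded by $|c|_v^{-1} < 1$, while the left can be made $\ge 1$ by taking $d$ large in $t$, since $\kappa_1 > 1$ depends only on $t$.

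The main subtlety is to secure a constant $\kappa_1 > 1$ that depends only on $t$: the Schinzel--Zassenhaus lower bound on $|c|_v$ is only $1 + 2^{-(t+4)}$, so $\kappa_1$ must be chosen very close to $1$, and the iteration cannot afford to lose constant factors such as $1/2$ at each stage. This is precisely why the target exponent $d^{n-1}(d-1) = d^n - d^{n-1}$ is the natural one: the subtracted term $|c|_v$ at each iteration is absorbed into the gap between $d^n$ and $d^{n-1}(d-1)$, which is what allows the induction to close without degrading the constant $\kappa_1$.
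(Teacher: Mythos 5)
Your proposal is correct and follows essentially the same route as the paper: the lower bound is the same induction, with $\kappa_1 = 1+2^{-(t+5)}$ coming from Schinzel--Zassenhaus and the subtracted $|c|_v$ absorbed using the gap between the exponents $d^n$ and $d^{n-1}(d-1)$ once $d$ is large in terms of $t$. The only cosmetic difference is in the upper bound, where you telescope the naive height inequality directly instead of invoking the canonical-height comparison of Lemma~\ref{lem:basic}; both yield $\kappa_2(t)>1$ via Voutier's bound \eqref{eq:C(t)}.
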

\begin{proof} First note that Lemma \ref{lem:SchinzelZassenhaus} implies that there exists a constant $\epsilon=\epsilon(t)>0$ depending only on $t=[K:\mathbb{Q}]$ such that $\max_{w\in M_K} |c|_w >1+\epsilon$.
In particular, working with the place $v$ such that $|c|_v=\max_{w\in M_K} |c|_w $, we see that 
\begin{equation}\label{eq:orbitzero1}
|c^d+c|_v\geq(1+\epsilon/2)|c|_v^{d-1}
\end{equation}
holds for all $d\gg_t0$; for if not, then the triangle inequality implies that 
\begin{equation*}
\begin{split}
\Big(\frac{\epsilon}{2}\Big)|c|_v^{d-1}=|c|_v^{d-1}\Big((1+\epsilon)-(1+\epsilon/2)\Big)
&\leq |c|_v^{d-1}\Big(|c|_v-\big(1+\epsilon/2\big)\Big)\\[3pt]
&=|c|_v^d-\big(1+\epsilon/2\big)|c|_v^{d-1}\leq|c|_v.
\end{split} 
\end{equation*}
Hence, $(1+\epsilon)^{d-2}\leq |c|_v^{d-2}\leq 2/\epsilon$, which implies that $d$ is bounded by a constant depending on $t$. Therefore, we deduce that there exists $D' = D'(t)$ such that \eqref{eq:orbitzero1} holds for all $d\geq D'$. Likewise, we may choose $D''=D''(t)$ depending only on $t$ such that 
\begin{equation}\label{eq:orbitzero2}
(1+\epsilon/2)^d-1\geq 1+\epsilon/2
\end{equation}
holds for all $d\geq D''$. In particular, if we assume that $d\geq\max\{D',D''\}$, then \eqref{eq:orbitzero1} and \eqref{eq:orbitzero2} hold simultaneously. 
From here we show by induction on $n\geq1$ that  
\begin{equation}\label{eq:orbitzero3}
\big|f_{d,c}^n(c)\big|_v\geq (1+\epsilon/2)|c|_v^{d^{n-1}(d-1)}. 
\end{equation}
Note that the $n=1$ case is simply \eqref{eq:orbitzero1}. In general, we have that 
\begin{equation*}
 \begin{split}
 |f_{d,c}^{n+1}(c)|_v&=\big|(f_{d,c}^{n}(c))^d+c\big|_v\geq\big|(f_{d,c}^{n}(c))\big|_v^d-|c|_v\\[5pt] 
 &\geq \big(1+\epsilon/2\big)^d |c|_v^{d^n(d-1)}-|c|_v\\[5pt] 
 &\geq \big(1+\epsilon/2\big)^d |c|_v^{d^n(d-1)}-|c|_v^{d^n(d-1)}\\[5pt]
 &\geq \big((1+\epsilon/2)^d-1\big)|c|_v^{d^n(d-1)}\\[5pt]
 &\geq(1+\epsilon/2)|c|_v^{d^{n}(d-1)},
\end{split}
\end{equation*}
and so \eqref{eq:orbitzero3} holds by induction as claimed. In particular, setting $\kappa_1:=1+\epsilon/2$ we obtain the lower bound as in Lemma \ref{lem:orbitzero}. 

As for the upper bound on heights, let $V(t)$ be the constant defined in \eqref{eq:C(t)} so that $h(c)\geq V(t)$ by \cite[Corollary 2]{PaulVoutier1996}. Then Lemma \ref{lem:basic} implies 
\begin{equation}\label{eq:canonicalht1}
\begin{split}
h(f_{d,c}^n(c))&\leq \hat{h}_{f_{d,c}}(f_{d,c}^n(c))+\frac{1}{(d-1)}(h(c)+\log2)\\[7pt]
&= d^n\hat{h}_{f_{d,c}}(c)+\frac{1}{(d-1)}(h(c)+\log2)\\[8pt]
&\leq d^n\big(h(c)+\frac{1}{(d-1)}(h(c)+\log2)\big)+\frac{1}{(d-1)}(h(c)+\log2)\\[8pt]
&\leq d^n\big(h(c)+h(c)(1+\log2/V(t))\big)+h(c)+\log2\\[8pt]
&\leq d^n(2+\log2/V(t))h(c)+(1+\log2/V(t))h(c)\\[8pt]
&\leq d^n(2+\log2/V(t))h(c)+(1+\log2/V(t))d^nh(c)\\[8pt] 
&\leq (3+\log4/V(t))d^nh(c). 
\end{split}
\end{equation}
In particular, setting $\kappa_2:=3+\log4/V(t)$ we obtain the upper bound in Lemma \ref{lem:orbitzero}.
\end{proof}
We now have the tools in place to prove that irreducible implies stable for large $d$ with no small prime factors. In what follows,
\begin{equation}\label{eq:r's}
R_K:=\big\{4^{e}\zeta\,:\, e\in\{0,1\}\;\text{and}\; \zeta\in\mu_{K}\big\}.
\end{equation}
In particular, we note that $R_K$ is a set of bounded height (independent of $K$) and that $R_K$ is closed under taking products by roots of unity in $K$. 
\begin{proof}[(Proof of Theorem \ref{thm:stable})] 
Assume that $f_{d,c}$ is irreducible over $K$ and that some iterate of $f_{d,c}$ is reducible. Then, by Theorem~\ref{thm:irreducibility+test}, it must be the case that $f_{d,c}^n(0)=ry^p$ for some $n\geq2$, some $r\in R_K$, some $y\in K$, and some prime $p|d$. From here, let $x=f_{d,c}^{n-1}(0)$ so that $x^d-ry^p=-c$ and choose a place $v\in M_{K}$ such that $|c|_v=\max_{w\in M_K} |c|_w$. Then Lemma \ref{lem:orbitzero} implies that $|x|_v=|f_{d,c}^{n-2}(c)|_v>1$ for all $n\geq3$. Likewise, when $n=2$ we have that $x=c$, so that $|x|_v=|c|_v>1$ in this case as well. In particular, $h(x)>0$ in all cases. Moreover, since $ry^p=x^d+c$ we see that  
\begin{equation}\label{eq:basic1}
\begin{split} 
h(y)=\frac{1}{p}h(y^p)=\frac{1}{p}h\Big(\frac{x^d+c}{r}\Big)&\leq\frac{1}{p}\big(h(x^d+c)+\log4\big)\\[5pt] 
& \leq \frac{d}{p}h(x)+\frac{1}{p}h(c)+\frac{1}{p}\log8
\end{split}     
\end{equation}
follows from standard properties of heights. On the other hand, since  $c=x^d(r(y/x^{d/p})^p - 1)$ and $c\neq0$ it follows from Proposition \ref{prop:linear+forms+in+logs} and \eqref{eq:basic1} that 
\vspace{.1cm} 
\begin{equation}\label{stability1}
\begin{split}
\log|c|_v&>d\log|x|_v-C_1(q,t)\log(p)\max\{h(r),V(t)\}\max\{h(y/x^{d/p}),V(t)\}\\[5pt] 
&\geq d\log|x|_v-C_2(q,t)\log(p)\Big(h(y)+\frac{d}{p}h(x)\Big) \\[5pt] 
&\geq d\log|x|_v-C_2(q,t)\frac{\log p}{p}\Big(2dh(x)+h(c)+\log8\Big)\\[2pt]  
\end{split} 
\end{equation}
for some positive constants $C_1(q,t)$ and $C_2(q,t)$ depending only on $t = [K:\Q]$ and the largest rational prime $q$ dividing a finite place in $S$. On the other hand, Lemma \ref{lem:orbitzero} implies that 
\begin{equation}\label{eq:basic2}
h(x)\leq\kappa_2 d^{n-2}h(c)\leq \kappa_2|S|d^{n-2}\log|c|_v.
\end{equation}
Therefore, after combining \eqref{stability1} with \eqref{eq:basic2}, we see that 
\begin{equation}\label{stability2}
\begin{split}
\log|c|_v&>d\log|x|_v-C_3(q,t)\frac{\log p}{p}d^{n-1}\log|c|_v-C_4(q,t)\\[2pt]  
\end{split} 
\end{equation}
for some positive constants $C_3(q,t)$ and $C_4(q,t)$ depending only on $q$ and $t$; here we use also that $(\log p)/p$ is bounded. In particular, we deduce from \eqref{stability2} that
\begin{equation}\label{eq:n=2}
\begin{split} 
C_4(q,t)>\Big(d-C_3(q,t)\frac{\log p}{p}d-1\Big)\log|c|_v
\end{split}
\end{equation}
when $n=2$. However, since $\log(p)/p\rightarrow0$, there is a constant $m_1:=m_1(q,t)$ depending only on $q$ and $t$ such that 
\begin{equation} \label{stability3}
C_3(q,t)\frac{\log p}{p}<1/2
\end{equation}
holds for all $p\geq m_1$. Thus, if we assume that $p\geq m_1$, then \eqref{eq:n=2} and \eqref{stability3} imply that 
\begin{equation} \label{stability4}
C_4(q,t)>(d/2-1)|c|_v>(d/2-1)(1+\epsilon),
\end{equation}
where $\epsilon=\epsilon(t)>0$ depends only on $t$; see Lemma \ref{lem:SchinzelZassenhaus}. In particular, $d\leq m_2(q,t)$ is bounded by a constant depending only on $q$ and $t$ when $n=2$. Similarly for $n\geq3$, we see from Lemma \ref{lem:orbitzero} that 
\[d^{n-3}(d-1)\log|c|_v<d^{n-3}(d-1)\log|c|_v+\log(\kappa_1)\leq\log|f_{d,c}^{n-2}(c)|_v=\log|x|_v.\]
Hence, the bound above and \eqref{stability2} together imply that 
\begin{equation}\label{stability5}
\begin{split} 
\log|c|_v&>\Big(d-1-C_3(q,t)\frac{\log p}{p}d\Big)d^{n-2}\log|c|_v-C_4(q,t).
\end{split}
\end{equation}
But then as before, if $p\geq m_1$ then we see from \eqref{stability3} and \eqref{stability5} that  
\[C_4(q,t)\geq (d/2-2)\log|c|_v\geq (d/2-2)\log(1+\epsilon).\]
Here we use also that $d^{n-2}>1$. However, again this implies that $d\leq m_3(q,t)$ is bounded by a constant depending only on $q$ and $t$ in the case when $n\geq3$.

To summarize: assuming $h(c)>0$ and $c\in\mathfrak{o}_{K,S}$, then we have shown that if every prime divisor $p|d$ satisfies $p>M_1(q,t):=\max\{m_1(q,t),m_2(q,t),m_3(q,t)\}$, then it is not possible for $f_{d,c}^n(0)=ry^m$ for some $r\in R_K$, some $y\in K$, some $p|d$, and some iterate $n\geq2$. In particular, it follows from repeated application of Theorem \ref{thm:irreducibility+test} that $f_{d,c}^n$ is irreducible over $K$ for all $n\geq2$ whenever $f_{d,c}$ is irreducible over $K$ as claimed.     
\end{proof}
\begin{remark}\label{rem:criticalorbitpowers} Since it will be useful for future arguments, we note that in the course of proving Theorem \ref{thm:stable} we established the following fact regarding the set $R_K$ defined in \eqref{eq:r's}: for all $c\in\mathfrak{o}_{K,S}$ with $h(c)>0$ and all $d$ with no small prime divisors depending on $q$ and $t$, if $f_{d,c}^n(0)=ry^p$ for some $y\in K$, some $n\geq1$, some $r\in R_K$, and some prime $p|d$, then necessarily $n=1$. Moreover, we note that $R_K$ is a set of bounded height (independent of $K$) and is closed under multiplication by roots of unity.     
\end{remark}
As a next step, we show that for all suitable $f_{d,c}$, if $f_{d,c}^N(a)=ry^p$ for some $a,y\in \mathfrak{o}_{K,S}$, some $r\in R_K$, some prime $p|d$, and some large $N:=N(q,t)$, then $ry^p$ is a periodic point of $f_{d,c}$. 
\begin{prop}\label{prop:power} Let $K$ be a number field, let $S$ be a finite set of places of $K$ containing the archimedean ones, let $q$ be the largest rational prime dividing a place of $S$, let $t:=[K:\mathbb{Q}]$, and let $f_{d,c}(x)=x^d+c$ for some nonzero $c\in\mathfrak{o}_{K,S}$ and some $d\geq2$. Then there exists a constant $M_2(q,t)$ depending only on $q$ and $t$ and a constant $N(d,q,t)$ depending only on $q$, $t$ and $d$ such that if the following conditions are satisfied: \vspace{.1cm}  
\begin{enumerate}
    \item[\textup{(1)}] every prime $p|d$ satisfies $p\geq M_2(q,t)$, and\vspace{.15cm} 
    \item[\textup{(2)}] $f_{d,c}^n(a)=ry^p$ for some $a\in\mathfrak{o}_{K,S}$, $y\in K$, $r\in R_K$, $p|d$ and $n\geq N$,
    \vspace{.1cm} 
\end{enumerate}
then $a\in\PrePer(f_{d,c},K)$ and $ry^p\in\Per(f_{d,c},K)$.  
\end{prop}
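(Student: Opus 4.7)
The plan is to mirror the Baker's-method strategy of Theorem~\ref{thm:stable}, applying Proposition~\ref{prop:linear+forms+in+logs} to the identity $x^d + c = ry^p$ with $x := f_{d,c}^{n-1}(a) \in \mathfrak{o}_{K,S}$ (handling the degenerate case $x = 0$ separately via Remark~\ref{rem:criticalorbitpowers}). This should produce a tight upper bound on $h(x_{n-1})$, which via Lemma~\ref{lem:basic} upgrades to a bound on $\hat{h}_{f_{d,c}}(a)$; combined with a lower bound coming from the sharply restricted structure of $\PrePer(f_{d,c},K)$ given by Theorem~\ref{thm:prep+int}, this should force $n \le N(d,q,t)$ unless $a$ is already preperiodic.

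Concretely, I would rewrite the equation as $r(y/x^{d/p})^p - 1 = c/x^d$ and apply Baker at a place $v \in S$ maximizing $|x|_v = \houseS(x)$. Since $h(r)=O(1)$ (the set $R_K$ defined in \eqref{eq:r's} has bounded height) and $p\,h(y) \le d\,h(x) + h(c) + O(1)$ from $ry^p = x^d+c$, the Baker lower bound takes the form
\[
\log|c|_v - d\log|x|_v \;\ge\; -C_1(q,t)\,\tfrac{\log p}{p}\,\bigl(2d\,h(x)+h(c)+O(1)\bigr).
\]
Using $h(x) \le |S|\log|x|_v$ and $\log|c|_v \le t\,h(c)$, and choosing $M_2(q,t)$ so that $2C_1(q,t)|S|\log p/p < 1/2$ whenever $p \ge M_2(q,t)$, this yields $h(x_{n-1}) \le C(q,t)(h(c)+1)/d$, hence by Lemma~\ref{lem:basic}
\[
d^{n-1}\hat{h}_{f_{d,c}}(a) \;=\; \hat{h}_{f_{d,c}}(x_{n-1}) \;\le\; \tfrac{C'(q,t)(h(c)+1)}{d-1}.
\]

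Assuming for contradiction that $a \notin \PrePer(f_{d,c},K)$, I would then split on $h(c)$ using Theorem~\ref{thm:prep+int}. In the easy case $h(c)\le\log 3$, part (3) forces every preperiodic point to lie in $\{0\}\cup\mu_K$, so any non-preperiodic $a\in\mathfrak{o}_{K,S}$ satisfies $h(a)\ge V(t)$ by the Voutier-type bound \eqref{eq:C(t)}, and Lemma~\ref{lem:basic} then gives $\hat{h}_{f_{d,c}}(a)\ge V(t)/2$ for $d$ sufficiently large in $t$; together with the displayed canonical-height bound this forces $n \le N_1(d,q,t)$. In the case $h(c) > \log 3$, Theorem~\ref{thm:prep+int}(2) restricts $\PrePer(f_{d,c},K)$ to $\{\zeta y : \zeta \in \mu_{K,d}\}$ with $c = y - y^d$, or to $\emptyset$; one must then iterate the backward recurrence $h(x_k) \le (h(x_{k+1}) + h(c) + O(1))/d$ from $k = n-1$ down to $k=0$ to produce an upper bound on $h(a)$ of the form $(h(c) + O(1))/(d-1) + o_n(1)$, and use this to rule out any non-preperiodic integer $a$ in this height regime by establishing a Lehmer-type lower bound $\hat{h}_{f_{d,c}}(a) \ge C''(q,t)\,h(c)/d^{N_0}$ for $a \in \mathfrak{o}_{K,S}$ outside the exceptional set $\{\zeta y\}$.

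The hard part, as I expect it, is this last case: establishing the uniform Lehmer-type estimate on non-preperiodic integral $a$ is exactly what is needed to absorb the $h(c)$ factor on the right of the canonical-height bound and yield an $n$-bound independent of $c$. Once $a \in \PrePer(f_{d,c},K)$ is secured, the second conclusion $ry^p \in \Per(f_{d,c},K)$ is immediate: by Corollary~\ref{cor:prep+int} the tail length of $a$ is at most $B(q,t)$, so enlarging $N(d,q,t)$ to exceed $B(q,t)+1$ forces $x_n$ into the periodic portion of the orbit of $a$.
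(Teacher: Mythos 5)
Your Baker's-method half matches the paper's proof almost exactly: the paper also writes $c=x^d\bigl(r(y/x^{d/p})^p-1\bigr)$ with $x=f_{d,c}^{n-1}(a)$, applies Proposition~\ref{prop:linear+forms+in+logs} at a place maximizing $|x|_v$, uses $h(y)\le \frac{d}{p}h(x)+\frac1p h(c)+O(1)$, $h(x)\le|S|\log|x|_v$ and $\log|c|_v\le t\,h(c)$, and chooses $M_2(q,t)$ so that $C(q,t)\frac{\log p}{p}<\frac1{2|S|}$; and your endgame (tail lengths uniformly bounded via Corollary~\ref{cor:prep+int}, then enlarge $N$) is exactly the paper's $N_4(q,t)$ step. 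The genuine gap is the ingredient you yourself flag as ``the hard part'' and never supply: a uniform lower bound on $\hat h_{f_{d,c}}(a)$ for non-preperiodic $a$. This is not something to be established ad hoc inside this proof; the paper imports it as \cite[Theorem 1]{Ingram}, which for maps with good reduction outside $S$ gives $\hat h_{f_{d,c}}(a)\ge\kappa_3(d,q,t)\max\{1,h(c)\}$ for every non-preperiodic $a\in K$. That single input, combined with Lemma~\ref{lem:basic}, yields $h(x)\ge\kappa_3 d^{\,n-1}\max\{1,h(c)\}-\frac{1}{d-1}(h(c)+\log 2)$, which both absorbs the $h(c)$ on the Baker side and disposes of the degenerate case $h(x)=0$ (where Baker gives nothing); without it, your upper bound $h(x_{n-1})\le C(q,t)(h(c)+1)/d$ bounds nothing.

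Moreover, your substitute argument in the case $h(c)\le\log 3$ is incorrect as stated: Theorem~\ref{thm:prep+int}(3) says preperiodic points are $0$ or roots of unity, and you use the converse, concluding that a non-preperiodic $a\in\mathfrak o_{K,S}$ has $h(a)\ge V(t)$. A wandering point can perfectly well have height zero: for $K=\Q$, $c=2$, $a=0$ (or $a=1$) is not preperiodic for $x^d+2$, yet $h(a)=0$, so no bound $\hat h_{f_{d,c}}(a)\ge V(t)/2$ follows. In the regime $h(c)>\log 3$ you only assert that a Lehmer-type bound $\hat h_{f_{d,c}}(a)\ge C''(q,t)h(c)/d^{N_0}$ ``must be established''; note also that the backward recurrence $h(x_k)\le (h(x_{k+1})+h(c)+O(1))/d$ cannot by itself force $n\le N(d,q,t)$, since Northcott-type counting of the bounded-height orbit segment would introduce dependence on $K$ and on $h(c)$, which the statement forbids. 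So the proposal is structurally aligned with the paper but incomplete: cite or prove the uniform canonical-height lower bound for non-preperiodic $S$-integral points (Ingram's theorem), and the rest of your argument then goes through as in the paper.
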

\begin{proof} Suppose that $f_{d,c}^n(a)=ry^p$ for some $a\in\mathfrak{o}_{K,S}$, $y\in K$, $r\in R_K$, $p|d$ and $n\geq 2$. Then, as in the proof of Theorem \ref{thm:stable}, we have that $x^d+c=ry^p$ where $x=f_{d,c}^{n-1}(a)$. Now we assume that $a$ is not preperiodic for $f_{d,c}$. Then \cite[Theorem 1]{Ingram} and Lemma \ref{lem:basic} together imply that there exists a $\kappa_3(d,q,t)>$ such that   
\begin{equation}\label{eq:power1}
h(x)\geq\kappa_3(d,q,t) d^{n-1}\max\{1,h(c)\}-\frac{1}{(d-1)}(h(c)+\log2).   
\end{equation}
In particular, we see that if $h(x)=0$, then 
\[\kappa_3(d,q,t) d^{n-1}\leq\frac{1}{(d-1)}\bigg(\frac{h(c)}{\max\{1,h(c)\}}+\frac{\log2}{\max\{1,h(c)\}}\bigg)\leq 1+\log2.  \]
Thus $n\leq N_1(d,q,t)$ for some constant $N_1(d,q,t)$ depending on $d$, $q$, and $t$. Specifically,  
\[n\leq \big\lceil \log_d\big((1+\log2)/\kappa_3(d,q,t)\big)\big\rceil+1.\] 
in this case. Likewise, if $h(x)>0$ and $|\cdot|_v$ is such that $|x|_v=\houseS(x)$, then (similar to the proof of Theorem \ref{thm:stable}), we see that 
\begin{equation}\label{eq:power2}
\begin{split}
\log|c|_v&>d\log|x|_v-C_1(q,t)\max\{h(r),V(t)\}\max\{h(y/x^{d/p}),V(t)\}\log(p)\\[5pt] 
&\geq d\log|x|_v-C_2(q,t)\frac{\log p}{p}dh(x)-C_3(q,t)\frac{\log p}{p}h(c)-C_4(q,t) 
\end{split} 
\end{equation}
for some positive constants $C_i(q,t)$ depending only on $q$ and $t$; here we use \eqref{eq:basic1} and also Proposition \ref{prop:linear+forms+in+logs}. On the other hand,
\begin{equation}\label{eq:power3} 
\frac{1}{[K:\mathbb{Q}]}\log|c|_v\leq\frac{1}{[K:\mathbb{Q}]}\houseS(c)\leq h(c)\;\;\;\text{and}\;\;\; h(x)\leq |S|\cdot\houseS(x)=|S|\cdot\log|x|_v.
\end{equation}
In particular, combining \eqref{eq:power1}, \eqref{eq:power2}, and \eqref{eq:power3} we see that 
\begin{equation}\label{eq:power4}
\begin{split}
\scalemath{.85}{C_5(q,t)h(c)+C_4(q,t)}&\scalemath{.85}{\geq\bigg(\frac{1}{|S|}-C_2(q,t)\frac{\log p}{p}\bigg)dh(x)}\\[5pt]
&\scalemath{.85}{\geq\bigg(\frac{1}{|S|}-C_2(q,t)\frac{\log p}{p}\bigg)\Big(\kappa_3(d,q,t) d^n\max\{1,h(c)\}-\frac{d}{(d-1)}(h(c)+\log2)\Big)}. 
\end{split}
\end{equation}
Here we also use that $(\log p)/p$ is a bounded sequence. Next we will assume that every prime $p|d$ is sufficiently large so that  
\begin{equation}\label{eq:power5}
\begin{split}
C_2(q,t)\frac{\log p}{p}<\frac{1}{2|S|}.
\end{split}
\end{equation}
Then, since $d/(d-1)\leq 2$ is also bounded, we deduce from \eqref{eq:power4} and \eqref{eq:power5} that
\[C_6(q,t)h(c)+C_7(q,t)\geq \kappa_4(d,q,t) d^n\max\{1,h(c)\}.\]
Hence, we have that  
\[\kappa_4(d,q,t) d^n\leq \frac{C_5(q,t) h(c)}{\max\{1,h(c)\}}+\frac{C_7(q,t)}{\max\{1,h(c)\}}\leq C_8(q,t)\]
Therefore, we see that $n\leq N_2(d,q,t)$ for some constant $N_2(d,q,t)$ depending on $d$, $q$, and $t$ in the case when $h(x)>0$ as well.

In summary, we have shown that if every prime $p|d$ is sufficiently large depending on $q$ and $t$, then there is a constant $N_3(d,q,t)=\max\{N_1(d,q,t),N_2(d,q,t)\}$ such that if $f_{d,c}^n(a)=ry^p$ for some $n\geq N_3$, $a\in\mathfrak{o}_{K,S}$, $y\in K$, $r\in R_{K}$, and $p|d$, then $a$ must be \emph{preperiodic} for $f_{d,c}$. On the other hand, by Corollary \ref{cor:prep+int} there is an $N_4=N_4(q,t)$ such that if $a\in \mathfrak{o}_{K,S}$ is a preperiodic point for $f_{d,c}(x)=x^d+c$ for $c\in\mathfrak{o}_{K,S}$, then $f_{d,c}^{n}(a)$ must be a \emph{periodic} point for $f_{d,c}$ for all $n\geq N_4$; that is, the tail lengths of preperiodic points are uniformly bounded. In particular, if we define $N(d,q,t):=\max\{N_3(d,q,t),N_4(q,t)\}$ and assume that $f_{d,c}^n(a)=ry^p$ for some $a\in\mathfrak{o}_{K,S}$, $y\in K$, $r\in R_K$, $p|d$ and $n\geq N$, then $a$ is must be preperiodic by definition of $N_3$ and $f_{d,c}^{n}(a)=ry^p$ must be periodic by definition of $N_4$. The claim follows.
\end{proof}

In particular, we see that if $f^N$ is irreducible and $f^N\circ g$ is reducible for some $f(x)=x^d+c$ and some $g\in\langle x^d+c_1,\dots,x^d+c_s\rangle$, then it must be the case that $f$ has a $p$th powered periodic point (up to a small multiple $r$) for some prime $p|d$. However, for sufficiently large degrees $d$, all periodic points for $x^d + c$ are fixed points, outside of a set of $c$ of bounded height. With this in mind, we make the following definition.  
\begin{defin}\label{def:poweredfixedpoint} Let $K$ be a field, let $d\geq2$, and let $c\in K$. Then we say that $f(x)=x^d+c$ has a \emph{powered fixed point} if there exist $y\in K$, a prime $p|d$, and $r\in\{\pm{1},\pm{4}\}$ such that $f(ry^p)=ry^p$. Equivalently, $f$ has a powered fixed point if it is of the form $f(x)=x^d+ry^p-(ry^p)^d$ for the stipulated $y$, $p$, and $r$. 
\end{defin}
We immediately deduce the following irreducibility result for semigroups containing an irreducible map with no powered fixed points. 
\begin{prop}\label{prop:nopoweredfixedpoints} Let $K$ be a number field, let $S$ be a finite set of places of $K$ containing the archimedean ones, let $q$ be the largest rational prime dividing a place of $S$, let $t:=[K:\mathbb{Q}]$, and let $G=\langle x^{d}+c_1,\dots, x^{d}+c_s\rangle$ for some $c_1,\dots,c_s\in \mathfrak{o}_{K,S}$ and some $d\geq2$. Moreover, assume that $G$ contains $f(x)=x^d+c$ that is irreducible over $K$ and has no powered fixed points. Then there exists a constant $M_3(q,t)$ depending only on $q$ and $t$ and a constant $N = N(d,q,t)$ depending only on $q$, $t$ and $d$ such that if $h(c)>\log3$ and if every prime $p|d$ satisfies $p\geq M_3(q,t)$, then  
\[\{f^N\circ g\,:\, g\in G\}\]
is a set of irreducible polynomials in $K[x]$.   
\end{prop}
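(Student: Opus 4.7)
The plan is to fix the constants $M_3(q,t) := \max\{M_1(q,t), M_2(q,t), D(q,t)\}$, where $M_1$, $M_2$, and $D$ come from Theorem~\ref{thm:stable}, Proposition~\ref{prop:power}, and Theorem~\ref{thm:prep+int} respectively, and $N := N(d,q,t)$ as in Proposition~\ref{prop:power}. This choice guarantees that all three preceding results are simultaneously in force. I will then prove by strong induction on the composition length $k \ge 1$ that $F \circ g$ is irreducible in $K[x]$ for every $g = g_k \circ \cdots \circ g_1 \in G$ (with each $g_i(x) = x^d + c_{j_i}$), where $F := f^N$.

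The base of the induction relies on Theorem~\ref{thm:stable}: since $f$ is irreducible and every prime divisor of $d$ exceeds $M_1(q,t)$, the map $f$ is stable over $K$, so $F = f^N$ is monic and irreducible. For the inductive step, I would regroup the composition as $F \circ g = H \circ g_1$, where $H := F \circ g_k \circ \cdots \circ g_2$ (interpreting $H = F$ when $k = 1$). When $k \ge 2$, writing $g'' := g_k \circ \cdots \circ g_2 \in G$ of length $k-1$, the inductive hypothesis gives that $H = F \circ g''$ is irreducible; in all cases $H$ is monic.

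Supposing for contradiction that $F \circ g = H \circ g_1$ were reducible, Theorem~\ref{thm:irreducibility+test} applied to the monic irreducible $H$ and to $g_1(x) = x^d + c_{j_1}$ would yield
\[
f^N(a) \;=\; H(c_{j_1}) \;=\; H(g_1(0)) \;=\; r y^p
\]
for some $r \in \{\pm 1, \pm 4\} \subset R_K$, some $y \in K$, and some prime $p \mid d$, where $a := (g_k \circ \cdots \circ g_2)(c_{j_1}) \in \mathfrak{o}_{K,S}$ (integrality follows because the $c_{j_i}$ all lie in $\mathfrak{o}_{K,S}$). Proposition~\ref{prop:power}, whose hypotheses are now met by the choice of $N$ and $M_3$, then forces $a \in \PrePer(f,K)$ and $r y^p \in \Per(f,K)$. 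Theorem~\ref{thm:prep+int}(2), together with $h(c) > \log 3$ and $d \ge D(q,t)$, pins down $\Per(f,K) = \{y_0\}$, where $y_0$ is the unique fixed point satisfying $c = y_0 - y_0^d$. Hence $r y^p = y_0$, meaning $f(r y^p) = r y^p$ with $r \in \{\pm 1, \pm 4\}$ and $p \mid d$ prime --- exactly a powered fixed point in the sense of Definition~\ref{def:poweredfixedpoint}, contradicting the hypothesis.

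The main obstacle is not a deep technical one but rather careful orchestration: one must peel off a generator from the \emph{innermost} side of the composition so that the ``$f$'' appearing in Theorem~\ref{thm:irreducibility+test} is a genuine unicritical polynomial $x^d + c_{j_1}$ (peeling from the outside would leave a polynomial of degree $d^{k-1}$ in that slot), and at each step one must maintain that $H$ is both monic and irreducible, which ultimately rests on the stability of $f$ provided by Theorem~\ref{thm:stable}.
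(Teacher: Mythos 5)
Your argument is correct and follows essentially the same route as the paper's proof: choose $M_3=\max\{D,M_1,M_2\}$ and $N$ from Proposition~\ref{prop:power}, use Theorem~\ref{thm:stable} for irreducibility of $f^N$, peel off the innermost generator via Theorem~\ref{thm:irreducibility+test} (your induction is just the explicit form of the paper's ``repeated application''), and then combine Proposition~\ref{prop:power} with Theorem~\ref{thm:prep+int} to force a powered fixed point, a contradiction. Your added care about peeling from the inside and about $a\in\mathfrak{o}_{K,S}$ only makes explicit what the paper leaves implicit.
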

\begin{proof} Assume that $f(x)=x^d+c\in G$ is irreducible and without a powered fixed point and that $h(c)>\log3$. Let $M_3(q,t)=\max\{D(q,t),M(q,t), M_2(q,t)\}$, where $D(q,t)$ is as in Theorem \ref{thm:prep+int}, $M(q,t)$ is as in Theorem \ref{thm:stable}, and $M_2(q,t)$ is as in Proposition \ref{prop:power}. Likewise, let $N = N(d,q,t)$ be as in Proposition \ref{prop:power} and assume that every prime $p|d$ satisfies $p\geq M_3(q,t)$. Then Theorem \ref{thm:stable} implies that $f^N$ is irreducible over $K$. Now suppose for a contradiction that $f^N\circ g$ is reducible for some $g\in G$. Since $g$ is a composition of maps of the form $x^{d} + c_i$, repeated application of Theorem \ref{thm:irreducibility+test} implies that $f^N(a)=ry^p$ for some $a,y\in K$, some $r\in\{\pm{1},\pm{4}\}$, and some prime $p|d$. In particular, it follows from Proposition \ref{prop:power} that $ry^p$ is a periodic point for $f$. But then $ry^p$ must be a fixed point for $f$ by Theorem \ref{thm:prep+int}, and we reach a contradiction of our assumption that $f$ has no powered fixed points.   
\end{proof}
Thus we have succeeded in constructing a large set of irreducible polynomials in $G$ unless \emph{every irreducible polynomial in the generating set of $G$ (of sufficiently large height) has a powered fixed point}. To handle the case when $G$ contains at least two such maps, we use the following result.   
\begin{lem}\label{lem:two+special+overlap} Let $K$ be a number field and let $S$ be a finite set of places of $K$ containing the archimedean ones and all places $v$ with $|2|_v\neq1$. Moreover, let $a,b,y,z\in \mathfrak{o}_{K,S}$, let $d\geq2$, let $\zeta_1,\zeta_2\in\mu_{K,d}$, and assume that 
\begin{equation}\label{eq1:two-irre-overlap}
    a^{d}+y-y^{d}=\zeta_2z\;\;\;\,\text{and}\,\;\;\; b^{d}+z-z^{d}=\zeta_1y.\vspace{.1cm}  
\end{equation}
Then there exists a constant $D_2(q,t)$ depending only on the largest rational prime dividing a finite place of $S$ and the degree $t=[K:\mathbb{Q}]$ such that if $d>D_2(q,t)$ and if $\min\{h(y),h(z)\}>0$, then either $z=\zeta_1 y$ or $y=\zeta_2z$.   
\end{lem}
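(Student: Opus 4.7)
The plan is to adapt the linear-forms-in-logarithms strategy used in the proof of Theorem~\ref{thm:prep+int}. Rewriting \eqref{eq1:two-irre-overlap} as $y^d - a^d = y - \zeta_2 z$ and $z^d - b^d = z - \zeta_1 y$, we see that $y = \zeta_2 z$ is equivalent to $y^d = a^d$ and $z = \zeta_1 y$ is equivalent to $z^d = b^d$. Thus, if both conclusions fail, neither $a/y$ nor $b/z$ is a $d$-th root of unity, and Proposition~\ref{prop:linear+forms+in+logs} is available for both ratios.

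Suppose for contradiction that $y \ne \zeta_2 z$ and $z \ne \zeta_1 y$. By symmetry we may assume $\houseS(y) \geq \houseS(z)$ and work with the first equation (the case $\houseS(z) > \houseS(y)$ is handled identically with the second equation and yields the contradictory statement $z = \zeta_1 y$). Fix a place $v\in S$ with $|y|_v = \houseS(y) =: H$. Since $h(y) > 0$, Corollary~\ref{cor:SchinzelZassenhaus} gives $H > 1 + 2^{-(t+4)}$. A direct estimate yields
\begin{equation*}
|y|_v^d \cdot |1 - (a/y)^d|_v \;=\; |y - \zeta_2 z|_v \;\leq\; |y|_v + |z|_v \;\leq\; 2H,
\end{equation*}
so $\log|1 - (a/y)^d|_v \leq \log 2 - (d-1)\log H$.

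For the matching lower bound, apply Proposition~\ref{prop:linear+forms+in+logs} with $n = 1$, $\alpha_1 = a/y$, and $b_1 = d$. To control $h(a/y) \leq h(a) + h(y)$, observe that $a \in \mathfrak{o}_{K,S}$ and that at every $w \in S$ the relation $a^d = y^d - y + \zeta_2 z$ gives $|a|_w^d \leq 3H^d$ (using $|y|_w, |z|_w \leq H$ and $H > 1$), hence $|a|_w \leq 3^{1/d} H \leq 2H$. Thus $\houseS(a) \leq 2H$. Since the cardinality of $S$ is bounded by a constant depending only on $q$ and $t$ (as observed in the proof of Theorem~\ref{thm:prep+int}), the inequalities \eqref{eq:house/height} yield $h(a/y) \leq C_1(q,t)(\log H + 1)$. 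Since also $N(v)/\log N(v)$ is bounded by a constant depending only on $q$ and $t$, Proposition~\ref{prop:linear+forms+in+logs} produces
\begin{equation*}
\log 2 - (d-1)\log H \;\geq\; \log|1 - (a/y)^d|_v \;\geq\; -C_2(q,t)(\log H + 1)\log d.
\end{equation*}

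Rearranging, $\bigl(d - 1 - C_2(q,t)\log d\bigr)\log H \leq \log 2 + C_2(q,t)\log d$. Since $\log H$ is bounded below by $\log(1 + 2^{-(t+4)})$, a positive constant depending only on $t$, the left-hand side grows linearly in $d$ while the right-hand side grows only logarithmically; for $d$ exceeding a suitable constant $D_2(q,t)$ depending only on $q$ and $t$ this is a contradiction, completing the proof. The main technical subtlety is the uniform control $|a|_w \leq 2H$ over \emph{all} $w \in S$, not merely at the maximizing place $v$, which is what forces $h(a/y)$ to depend linearly on $\log H$ with constants controlled by $q$ and $t$ alone. A minor side issue, handled trivially for $d$ large enough, is that $a \neq 0$: otherwise $y^d = y - \zeta_2 z$ would force $H^{d-1} \leq 2$, contradicting the Schinzel--Zassenhaus lower bound on $H$ once $d$ is sufficiently large in terms of $t$.
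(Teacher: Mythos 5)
Your proof is correct and follows essentially the same route as the paper: rewrite each equation as $y^d-a^d=y-\zeta_2 z$ (resp.\ $z^d-b^d=z-\zeta_1 y$), work at a place $v\in S$ maximizing $|y|_v$ for the larger of $\houseS(y),\houseS(z)$, pit the trivial upper bound on $|y-\zeta_2 z|_v$ against the lower bound from Proposition~\ref{prop:linear+forms+in+logs} applied to $1-(a/y)^d$, and use Corollary~\ref{cor:SchinzelZassenhaus} to force $d$ to be bounded in terms of $q$ and $t$. The only differences are bookkeeping (you bound $\houseS(a)$ place-by-place and $|y-\zeta_2 z|_v\le 2\houseS(y)$ directly, where the paper bounds $h(a)$ and $h(w_i)$ via height inequalities), and your explicit treatment of the $a=0$ case is a welcome extra detail.
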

\begin{proof} Let $S$ be as above and suppose that $a,b,y,z\in \mathfrak{o}_{K,S}$ satisfy \eqref{eq1:two-irre-overlap} and that $\min\{h(y),h(z)\}>0$. Moreover, assume that both $w_1:=\zeta_2z-y$ and $w_2:=\zeta_1y-z$ are nonzero. Then
\begin{equation}\label{special+overlap1}
\max\{h(w_1),h(w_2)\}\leq 2\max\{h(y),h(z)\}+\log2
\end{equation}
follows from standard properties of heights. Likewise, we see that 
\begin{equation}\label{special+overlap2}
\begin{split}
h(a)&\leq h(y)+\frac{1}{d}h(w_1)+\frac{1}{d}\log2, \\[5pt] 
h(b)&\leq h(z)+\frac{1}{d}h(w_2)+\frac{1}{d}\log2. 
\end{split}
\end{equation}
In particular, combining \eqref{special+overlap1} and \eqref{special+overlap2} with the fact that $d\geq2$, we see that 
\begin{equation}\label{special+overlap3}
\begin{split}
\max\{h(a),h(b)\}&\leq\Big(1+\frac{2}{d}\Big)\max\{h(y),h(z)\}+\frac{2}{d}\log(2)\\[5pt]
&\leq 2\max\{h(y),h(z)\}+\frac{1}{d}\log4. 
\end{split} 
\end{equation}
On the other hand, since $w_1,w_2\neq0$, it follows from Proposition \ref{prop:linear+forms+in+logs} and \eqref{special+overlap3} that \vspace{.1cm}  
\begin{equation}\label{special+overlap4}
\begin{split}
\log|w_1|_v& 
> d\log|y|_v-C_2(q,t)\log(d)\max\{h(y),h(z)\}-C_3(q,t),\\[5pt]
\log|w_2|_v&>d\log|z|_v-C_2(q,t)\log(d)\max\{h(y),h(z)\}-C_3(q,t). 
\end{split} 
\end{equation}
for all places $v\in S$ for some constants $C_2(q,t)$ and $C_3(q,t)$ depending only on $q$ and $t$. On the other hand, we have that 
\[h(w_i)\geq \frac{n_v}{[K:\mathbb{Q}]}\log|w_i|_v\]
for all places $v$. In particular, we deduce from \eqref{special+overlap1} that 
\begin{equation}\label{special+overlap5}
\begin{split}
\frac{2[K:\mathbb{Q}]}{n_v}\max\{h(y),h(z)\}+\frac{[K:\mathbb{Q}]}{n_v}\log2\geq \log|w_i|_v. 
\end{split} 
\end{equation}
Now, without loss of generality assume that $\houseS(y)\geq \houseS(z)$ and let $v\in S$ be such that $|y|_v=\houseS(y)$. Then $|S|\cdot\log|y|_v\geq\max\{h(y),h(z)\}$, and we see from \eqref{special+overlap4} and \eqref{special+overlap5} that
\begin{equation}\label{special+overlap6}
\begin{split}
C_4(q,t)\log|y|_v+C_5(q,t)>\big(d-C_2(q,t)\cdot|S|\cdot\log(d)\big)\log|y|_v
\end{split} 
\end{equation}
for some $C_4(q,t)$ and $C_5(q,t)$ depending only on $q$ and $t$. Finally, Lemma \ref{lem:SchinzelZassenhaus} implies that there exists $\epsilon(q,t)>0$ such that $|y|_v=\houseS(y)\geq 1+\epsilon(q,t)$. Hence, \eqref{special+overlap6} then yields 
\[\frac{C_5(q,t)}{\log(1+\epsilon(q,t))}>d-C_2(q,t)\cdot|S|\cdot \log(d)-C_4(q,t).\]
However, this implies that $d$ is bounded by a constant depending only on $q$ and $t$. In particular, for all $d\gg_{q,t}0$ it must be the case that either $w_1=0$ or $w_2=0$ as claimed. 
\end{proof}
As a consequence, we deduce the following irreducibility result for semigroups containing two irreducible maps, both with powered fixed points. 
\begin{prop}\label{prop:2specialirred,unrelated} Let $K$ be a number field, let $S$ be a finite set of places of $K$ containing the archimedean ones, let $q$ be the largest rational prime dividing a place of $S$, let $t:=[K:\mathbb{Q}]$, and let $G=\langle x^{d}+c_1,\dots, x^{d}+c_s\rangle$ for some $c_1,\dots,c_s\in \mathfrak{o}_{K,S}$ and $d\geq2$. Moreover, assume that $G$ contains irreducible polynomials $f_1=x^{d}+c_1$ and $f_2=x^{d}+c_2$ with powered fixed points $P_1$ and $P_2$, respectively, and that $P_1/P_2$ is not in $\mu_{K,d}$. Then there exists a constant $M_4(q,t)$ depending only on $q$ and $t$ and a constant $N(d,q,t)$ depending only on $q$, $t$ and $d$ such that if $\min\{h(c_1),h(c_2)\}>\log3$ and if every prime $p|d$ satisfies $p\geq M_4(q,t)$, then  
\[\{f_1^{N}\circ f_2\circ g\;:\; g\in G\}\;\;\;\;\text{or}\;\;\;\; \{f_2^{N}\circ f_1\circ g\;:\; g\in G\}\]
is a set of irreducible polynomials in $K[x]$.  
\end{prop}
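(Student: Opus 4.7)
The plan is to proceed by contradiction: assume both sets $\{f_1^N \circ f_2 \circ g : g \in G\}$ and $\{f_2^N \circ f_1 \circ g : g \in G\}$ contain a reducible polynomial, and then invoke Lemma~\ref{lem:two+special+overlap} to derive $P_1/P_2 \in \mu_{K,d}$, contradicting the hypothesis. I would take
\[
M_4(q,t) := \max\{D(q,t),\, M_1(q,t),\, M_2(q,t),\, D_2(q,t)\}
\]
(after, if necessary, enlarging $S$ to include the places above $2$, which does not affect the constants as they depend only on $q$ and $t$), so that Theorem~\ref{thm:prep+int}, Theorem~\ref{thm:stable}, Proposition~\ref{prop:power}, and Lemma~\ref{lem:two+special+overlap} are all in force, and set $N := N(d,q,t)$ as in Proposition~\ref{prop:power}. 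Since $h(c_i) > \log 3$, Theorem~\ref{thm:prep+int}(2) yields $c_i = P_i - P_i^d$ with $P_i$ the unique $K$-rational fixed point of $f_i$ and $\PrePer(f_i, K) = \{\zeta P_i : \zeta \in \mu_{K,d}\}$; in particular, the only $K$-rational periodic point of $f_i$ is $P_i$, and each $P_i$ has positive height by the proof of Theorem~\ref{thm:prep+int}.

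Writing $g_0 = g_s \circ \cdots \circ g_1$ with each $g_j$ a generator $x^d + c_{i_j}$, and using that $f_1^N$ is irreducible by Theorem~\ref{thm:stable}, I would choose the smallest index $k^* \in \{0, 1, \ldots, s\}$ for which $\Psi_{k^*} := f_1^N \circ f_2 \circ g_{k^*} \circ \cdots \circ g_1$ is reducible (with the convention $\Psi_0 := f_1^N \circ f_2$). A single application of Theorem~\ref{thm:irreducibility+test}---either to $\Psi_{k^*-1}$ composed with $g_{k^*}$ when $k^* \ge 1$, or to $f_1^N$ composed with $f_2$ when $k^* = 0$---produces $\beta \in \mathfrak o_{K,S}$ (with $\beta = 0$ if $k^* = 0$) such that
\[
f_1^N\bigl(f_2(\beta)\bigr) = r y^p
\]
for some $r \in R_K$, $y \in K$, and prime $p \mid d$. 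Proposition~\ref{prop:power} then forces $a := f_2(\beta) \in \PrePer(f_1, K)$ and $r y^p \in \Per(f_1, K)$, so $r y^p = P_1$ and $a = \zeta_2 P_1$ for some $\zeta_2 \in \mu_{K,d}$. Substituting $c_2 = P_2 - P_2^d$ into $\beta^d + c_2 = \zeta_2 P_1$ gives
\[
\beta^d + P_2 - P_2^d = \zeta_2 P_1.
\]

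The symmetric argument applied to a reducible $f_2^N \circ f_1 \circ g_0'$ produces $\gamma \in \mathfrak o_{K,S}$ and $\zeta_1 \in \mu_{K,d}$ with $\gamma^d + P_1 - P_1^d = \zeta_1 P_2$. These two identities are precisely the hypotheses of Lemma~\ref{lem:two+special+overlap} with $(y, z) = (P_2, P_1)$: since $d > D_2(q,t)$ and $h(P_1), h(P_2) > 0$, the lemma gives either $P_1 = \zeta_1 P_2$ or $P_2 = \zeta_2 P_1$, each contradicting $P_1/P_2 \notin \mu_{K,d}$. I expect the main bookkeeping obstacle to be the peeling step: one must verify that the critical application of Theorem~\ref{thm:irreducibility+test} always yields a relation of the form $f_1^N(f_2(\beta)) = r y^p$ rather than $f_1^N(g_j(\beta)) = r y^p$ for some intermediate generator $g_j$, so that $c_2$, and hence $P_2 - P_2^d$, is forced to appear in the equation sent to Lemma~\ref{lem:two+special+overlap}. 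This is arranged by inserting $f_2$ explicitly between $f_1^N$ and the $g_j$'s and by taking $k^*$ minimal: at the critical step, the outer irreducible factor of the composition begins $f_1^N \circ f_2 \circ \cdots$, so the argument to $f_1^N$ is automatically of the form $f_2(\beta)$ with $\beta \in \mathfrak o_{K, S}$.
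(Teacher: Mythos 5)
Your argument is correct and is essentially the paper's own proof: assume both families contain a reducible element, use stability of $f_1^N$, $f_2^N$ (Theorem~\ref{thm:stable}) together with repeated application of Theorem~\ref{thm:irreducibility+test} to produce $f_1^N(f_2(\beta))=ry^p$ and $f_2^N(f_1(\gamma))=r'w^{p'}$ with $\beta,\gamma\in\mathfrak o_{K,S}$, feed these into Proposition~\ref{prop:power} and Theorem~\ref{thm:prep+int} to obtain $\beta^d+P_2-P_2^d=\zeta_2P_1$ and $\gamma^d+P_1-P_1^d=\zeta_1P_2$, and finish with Lemma~\ref{lem:two+special+overlap} to force $P_1/P_2\in\mu_{K,d}$ (the paper excludes $h(P_i)=0$ after the fact via $h(c_i)\le\log 2$, whereas you secure $h(P_i)>0$ up front; both are fine, as is your inclusion of $M_2(q,t)$ in the constant). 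The only flaw is notational, in the peeling step: with $g_0=g_s\circ\cdots\circ g_1$ your partial compositions should be the \emph{outer} segments $f_1^N\circ f_2\circ g_s\circ\cdots\circ g_j$ (so that the first reducible one genuinely decomposes as $(\text{monic irreducible})\circ(\text{generator})$), since with your inside-out indexing $\Psi_{k^*}\neq\Psi_{k^*-1}\circ g_{k^*}$; your closing remark shows you intend exactly the correct mechanism, which is the ``repeated application'' the paper leaves implicit.
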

\begin{proof} First we may enlarge $S$ to assume that $S$ contains every place $v$ with $|2|_v\neq1$. Let $f_1,f_2\in G$ be irreducible over $K$ with powered fixed points $P_1,P_2\in \mathfrak{o}_{K,S}$ respectively. Moreover, let $M_4(q,t)=\max\{D(q,t),M(q,t),D_2(q,t)\}$ where $D(q,t)$ comes from Theorem \ref{thm:prep+int}, $M(q,t)$ comes from Theorem \ref{thm:stable}, and $D_2(q,t)$ comes from Lemma \ref{lem:two+special+overlap}. Let $N = N(d,q,t)$ be as in Proposition \ref{prop:power}. Now assume that every prime $p|d$ satisfies $p\geq M_4(q,t)$ and that $\min\{h(c_1),h(c_2)\}>\log3$. Then $f_1^{N}$ and $f_2^{N}$ are irreducible by Theorem \ref{thm:stable}. Hence, if there exist polynomials of the form $f_1^{N_1}\circ f_2\circ g_1$ and $f_2^{N_2}\circ f_1\circ g_2$ for $g_1,g_2\in G\cup\{\text{Id}\}$ that are \emph{both} reducible, then repeated application of Theorem \ref{thm:irreducibility+test} implies that there exist some $a_1,a_2,b_1,b_2\in \mathfrak{o}_{K,S}$, some $r_1,r_2\in\{\pm{1},\pm{4}\}$, and  some $p_i|d$ such that $f_2^{N}(f_1(a_1))=r_1a_2^{p_1}$ and $f_1^{N}(f_2(b_1))=r_2b_2^{p_2}$. But then Proposition \ref{prop:power} implies that $f_1(a_1)\in \PrePer(f_2,K)$ and that $f_2(b_1)\in \PrePer(f_1,K)$. On the other hand, if follows from Theorem \ref{thm:prep+int} that 
\[\PrePer(f_1,K)=\big\{\zeta P_1\,:\, \zeta\in\mu_{K,d}\big\}
\;\;\; \text{and}\;\;\;
\PrePer(f_2,K)=\big\{\zeta P_2\,:\, \zeta\in\mu_{K,d}\big\}.\]
Hence, since $c_1=P_1-P_1^{d}$ and $c_2=P_2-P_2^{d}$ by assumption, it must be the case that 
\[a_1^{d}+P_1-P_1^{d}=\phi_1(a_1)=\zeta_2 P_2\;\;\;\text{and}\;\;\; b_1^{d}+P_2-P_2^{d}=\phi_2(b_1)=\zeta_1 P_1\] 
for some $\zeta_i\in \mu_{K,d}$. But then Lemma \ref{lem:two+special+overlap} implies that either $h(P_1)=0$ or $h(P_2)=0$ or $P_2=\zeta_1 P_1$. However, in the former case we have that 
\[h(c_i)=h(P_i-P_i^{d})\leq h(P_i)+dh(P_i)+\log2\leq \log2\]
for some $i$, a contradiction. Therefore, $P_1/P_2\in \mu_{K,d}$ as claimed. 
\end{proof}
Lastly, we handle the case when $G$ contains an irreducible map with a powered fixed point and another reducible polynomial. To do this, we need the following lemma; see \eqref{eq:r's} for the definition of the set $R_{K}$.  
\begin{lem}\label{lem:special+reducible} Let $K$ be a number field and let $S$ be a finite set of places of $K$ containing the archimedean ones and all places $v$ with $|2|_v\neq1$. Moreover, let $P,z\in \mathfrak{o}_{K,S}$ satisfy
\begin{equation}\label{eq:lem:special+reducible}
rz^{p}=\zeta_1 P-\zeta_2 P^d
\end{equation} 
for some $r\in R_{K}$, some $\zeta_1,\zeta_2\in \mu_{K}$, and some prime $p|d$. Then there exists a constant $M_5(q,t)$ depending only on the largest rational prime dividing a finite place of $S$ and the degree $t=[K:\mathbb{Q}]$ such that if every prime $p|d$ satisfies $p\geq M_5(q,t)$, then $h(P)=0$. 
\end{lem}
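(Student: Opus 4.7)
The plan is to proceed by contradiction following the strategy of Theorem~\ref{thm:stable}. Suppose $h(P)>0$; I will show $p$ must be bounded by a constant depending only on $q$ and $t$. First, $z\neq 0$: otherwise $\zeta_1 P=\zeta_2 P^d$ forces $P^{d-1}=\zeta_1/\zeta_2\in\mu_K$, so $P\in\mu_K$ and $h(P)=0$. Since $P$ is then a nonzero non-root-of-unity element of $\mathfrak o_{K,S}$, Corollary~\ref{cor:SchinzelZassenhaus} provides a place $v\in S$ with $|P|_v=\houseS(P)>1+2^{-(t+4)}$, and \eqref{eq:C(t)} gives $h(P)\ge V(t)$.

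The key step is to extract a small linear form. Dividing $rz^p=\zeta_1P-\zeta_2P^d$ by $-\zeta_2P^d$ yields
\[
 \left(\tfrac{-r}{\zeta_2}\right)\!\left(\tfrac{z}{P^{d/p}}\right)^{p} - 1 \;=\; -\tfrac{\zeta_1}{\zeta_2}\,P^{\,1-d},
\]
whose $v$-adic absolute value is exactly $|P|_v^{\,1-d}$ and hence very small when $d$ is large. Applying Proposition~\ref{prop:linear+forms+in+logs} with $n=2$, $\alpha_1=-r/\zeta_2$ (of height $\le\log 4$), $\alpha_2=z/P^{d/p}$, and $(b_1,b_2)=(1,p)$---exactly as in the derivation of \eqref{stability1}---I obtain a Baker lower bound
\[
 \log\!\left|(-r/\zeta_2)(z/P^{d/p})^{p}-1\right|_v > -C_1(q,t)\,h(z/P^{d/p})\,\log p.
\]
Standard height estimates applied to the original equation give $ph(z)\le h(rz^p)+\log 4\le(d+1)h(P)+O(1)$, whence $h(z/P^{d/p})\le(2d+1)h(P)/p+O(1)$.

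Combining these two estimates with the lower bound $\log|P|_v\ge h(P)/|S|\ge c_0(q,t)\,h(P)$---valid because $|S|$ is bounded in terms of $q$ and $t$---I arrive at
\[
 (d-1)\,c_0(q,t)\,h(P) \;<\; C_2(q,t)\,\frac{(2d+1)\,h(P)\,\log p}{p}+O(\log p).
\]
Dividing through by $h(P)\ge V(t)$ gives an inequality of the form $(d-1)\,c_0(q,t)<C_2(q,t)(2d+1)\log p/p+C_3(q,t)\log p$, and using $p\mid d$ (writing $d=pm$ with $m\ge1$) a short rearrangement forces $p/\log p$ to be bounded by a constant depending only on $q$ and $t$, hence $p\le M_5(q,t)$; choosing $M_5(q,t)$ strictly above this threshold yields the desired contradiction.

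The main technical subtlety is that $z/P^{d/p}$ may lie in the extension $K(P^{1/p})$ of degree up to $pt$ over $\Q$, so a naive application of Proposition~\ref{prop:linear+forms+in+logs} produces a Baker constant depending on $pt$ rather than on $t$ alone. This is precisely the issue addressed in the derivation of \eqref{stability1} in the proof of Theorem~\ref{thm:stable}, where the dependence is absorbed into $C_1(q,t)$; the present argument handles it in exactly the same way. As an alternative that avoids the extension entirely, one can apply Proposition~\ref{prop:linear+forms+in+logs} in $K$ itself with $n=3$, $\alpha_1=-\zeta_2/r$, $\alpha_2=P$, $\alpha_3=z$, and exponents $(1,d,-p)$; a parallel combination of estimates then yields the same bound on $p$.
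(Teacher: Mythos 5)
Your proof is correct and is essentially the paper's own argument: the paper likewise assumes $h(P)>0$, picks $v\in S$ with $|P|_v=\houseS(P)>1+\epsilon(q,t)$, applies Proposition~\ref{prop:linear+forms+in+logs} to the same linear form obtained by dividing the equation by $\zeta_2P^d$, bounds $h(z)$ (hence $h(z/P^{d/p})$) from the equation itself, and arrives at an inequality of the shape $\bigl(d-1-C(q,t)\tfrac{\log p}{p}(2d+1)\bigr)\log\houseS(P)\le C'(q,t)$, which forces $d$ (and hence the prime $p\mid d$) to be bounded in terms of $q$ and $t$. The only quibble is your closing ``technical subtlety'': since $p\mid d$, the exponent $d/p$ is an integer, so $z/P^{d/p}$ lies in $K$ and no extension $K(P^{1/p})$ ever enters---the worry is vacuous, and your main argument (like the paper's) is already carried out entirely over $K$.
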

\begin{proof} The argument is very similar to several above. Namely, if $h(P)>0$ and $v\in S$ is chosen so that $\houseS(P)=|P|_v$, then Proposition \ref{prop:linear+forms+in+logs} implies that
\[\log\houseS(P)>\Big(d-C_3(q,t)\cdot\frac{\log p}{p}(2d+1)\Big)\log\houseS(P)-C_4(q,t)\]
for some constants $C_3(q,t)$ and $C_4(q,t)$ depending only on $q$ and $t$. In particular, if we assume that $p\gg_{q,t}0$ so that $C_3(q,t)\log(p)/p\leq 1/2$, then
\[C_4(q,t)>(d/2-5/4)\log\houseS(P)\geq(d/2-5/4)\log(1+\epsilon(q,t))\]
for some $\epsilon(q,t)>0$ depending only on $q$ and $t$, which implies that $d$ (and so also its prime factors) is bounded. 
\end{proof}
We now have the tools in place to handle the case when our semigroup contains an irreducible map with a powered fixed point and a second reducible polynomial.  
\begin{prop}\label{prop:oneirred+onered} Let $K$ be a number field, let $S$ be a finite set of places of $K$ containing the archimedean ones, let $q$ be the largest rational prime dividing a place of $S$, let $t:=[K:\mathbb{Q}]$, and let $G=\langle x^{d}+c_1,\dots, x^{d}+c_s\rangle$ for some $c_1,\dots,c_s\in \mathfrak{o}_{K,S}$ and $d\geq2$. Moreover, suppose that $G$ contains an irreducible map $f_1(x)=x^{d}+c_1$ with a powered fixed point $P$ and a reducible map $f_2(x)=x^{d}+c_2$ such that $P/c_2$ is not in $\mu_{K,d}$. Then there exists a constant $M_6(q,t)$ depending only on $q$ and $t$ and a constant $N(d,q,t)$ depending only on $q$, $t$ and $d$ such that if $\min\{h(c_1),h(c_2)\}>\log3$ and if every prime $p|d$ satisfies $p\geq M_6(q,t)$, then
\[\{f_1^{N}\circ f_2^{N}\circ g\;:\; g\in M_S\}\]
is a set of irreducible polynomials in $K[x]$.     
\end{prop}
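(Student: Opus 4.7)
The plan is to argue by contradiction: assume $f_1^N\circ f_2^N\circ g$ is reducible for some $g\in G$, and derive a contradiction by identifying where reducibility first appears in the tower of outer compositions. I would set $M_6(q,t):=\max\{D(q,t),M_1(q,t),M_2(q,t),M_5(q,t)\}$ (the constants from Theorem \ref{thm:prep+int}, Theorem \ref{thm:stable}, Proposition \ref{prop:power}, and Lemma \ref{lem:special+reducible}), take $N:=N(d,q,t)$ from Proposition \ref{prop:power}, and enlarge $S$ if needed to contain all places $v$ with $|2|_v\neq 1$. Write $F:=f_1^N\circ f_2^N\circ g=F_1\circ\cdots\circ F_M$ as a composition of unicritical polynomials in which the outer $N$ factors equal $f_1$, the next $N$ equal $f_2$, and the remaining factors come from $g$; set $G_k:=F_1\circ\cdots\circ F_k$. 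By Theorem \ref{thm:stable}, $G_k$ is irreducible for $k\le N$, so the smallest index $k^*$ with $G_{k^*}$ reducible satisfies $k^*>N$. Applying Theorem \ref{thm:irreducibility+test} to $G_{k^*}=G_{k^*-1}\circ F_{k^*}$ yields $G_{k^*-1}(F_{k^*}(0))=ry^p$ with $r\in R_K$ and prime $p\mid d$; writing $G_{k^*-1}=f_1^N\circ H$ and $a:=H(F_{k^*}(0))$, Proposition \ref{prop:power} forces $a\in\PrePer(f_1,K)$, which Theorem \ref{thm:prep+int} identifies as $\{\zeta P:\zeta\in\mu_{K,d}\}$. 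Hence $a=\zeta_a P$ for some $\zeta_a\in\mu_{K,d}$.

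The proof then splits into three cases based on $k^*$. If $k^*=N+1$, then $H$ is trivial and $F_{k^*}(0)=c_2$, so $c_2=\zeta_a P$, contradicting $P/c_2\notin\mu_{K,d}$. If $k^*>2N$, then $H=f_2^N\circ H'$ for some $H'$; using the powered-fixed-point structure $P=r_0w_0^{p_0}$ we have $\zeta_a P=(\zeta_a r_0)w_0^{p_0}$ with $\zeta_a r_0\in R_K$ and prime $p_0\mid d$, so a second application of Proposition \ref{prop:power}, this time to $f_2$, yields $\zeta_a P\in\Per(f_2,K)$. Theorem \ref{thm:prep+int} then identifies $\zeta_a P$ as the unique fixed point of $f_2$, giving $c_2=\zeta_a P-P^d$; combining this with the Capelli factorization $c_2=r_2u_2^{p_2}$ forced by the reducibility of $f_2$ gives an identity of exactly the form treated in Lemma \ref{lem:special+reducible}, which forces $h(P)=0$ and hence $h(c_1)=h(P-P^d)\le\log 2<\log 3$, a contradiction.

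The remaining range $N+1<k^*\le 2N$ appears the most delicate at first, since $H=f_2^{k^*-1-N}$ supplies only $m-1$ iterations of $f_2$ with $m:=k^*-N\ge 2$, too few to invoke Proposition \ref{prop:power} a second time. However, here $a=\zeta_a P$ unwinds to the equation $f_2^m(0)=\zeta_a P=(\zeta_a r_0)w_0^{p_0}$; since $h(c_2)>0$, $\zeta_a r_0\in R_K$, $p_0$ is a prime dividing $d$, and $d$ has no small prime factors, this is exactly the configuration excluded for $n\ge 2$ by Remark \ref{rem:criticalorbitpowers}. That remark forces $m=1$, contradicting $m\ge 2$ and completing the proof.

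The main technical care is bookkeeping the peeling argument so that Proposition \ref{prop:power} and Lemma \ref{lem:special+reducible} can each be invoked with the correct hypotheses, in particular so that $\zeta_a P$ appears in the required $R_K$-times-$p$th-power form at the second application of Proposition \ref{prop:power}. Once this setup is in place, Remark \ref{rem:criticalorbitpowers} handles very cleanly what would otherwise be the hardest subcase, namely the middle range of $k^*$ where only a short prefix of the interior $f_2^N$ is available for reuse.
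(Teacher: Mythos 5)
Your proof is correct and takes essentially the same route as the paper's: the paper first shows $f_1^N\circ f_2^N$ is irreducible (your cases $k^*=N+1$ and $N+1<k^*\le 2N$, handled exactly as you do via the $P/c_2\notin\mu_{K,d}$ hypothesis and Remark~\ref{rem:criticalorbitpowers}), and then treats composition with $g\in G$ by two applications of Proposition~\ref{prop:power} followed by Lemma~\ref{lem:special+reducible} (your case $k^*>2N$). Your single ``first failure index'' organization, and your explicit inclusion of the constant from Proposition~\ref{prop:power} in the definition of $M_6(q,t)$, are only cosmetic (indeed slightly tidier) differences from the paper's two-stage argument.
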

\begin{proof} 
First, we enlarge $S$ if necessary to assume that $S$ contains the places $v$ of $K$ such that $|2|_v\neq1$. From here, we fix some notation. Let $f_1(x)=x^{d}+c_1$, $f_2(x)=x^{d}+c_2$, and $P$ be as above and let $N=N(d,q,t)$ be as in Proposition \ref{prop:power}. Moreover, assume that $P/c_2$ is not a $d$th root of unity in $K$, that $\min\{h(c_1),h(c_2)\}>\log(3)$, and that every prime $p|d$ satisfies $p>M_6(q,t):=\max\{D(q,t),M(q,t),M_5(q,t)\}$ where $D(q,t)$ comes from Theorem \ref{thm:prep+int}, $M(q,t)$ comes from Theorem \ref{thm:stable}, and $M_5(q,t)$ comes from Lemma \ref{eq:lem:special+reducible}. Next we note that
\begin{equation}\label{eq:special+red1}
P=r_1y^{p_1} \;\;\;\text{and}\;\;\;\; c_2=r_2z^{p_2}
\end{equation}
for some $y,z\in \mathfrak{o}_{K,S}$, some primes $p_i|d$, and some $r_1,r_2\in R_{K}$, where $R_K$ is defined in \eqref{eq:r's}; here we use that $P$ is a powered fixed point (see Definition \ref{def:poweredfixedpoint}) and that $f_2$ is reducible so that $f_2(0)=r_2z^{p_2}$ by Theorem \ref{thm:irreducibility+test}. Moreover, $f_1$ is stable over $K$ by Theorem \ref{thm:stable}.

From here, we show that $f_1^{N}\circ f_2^{N}$ is irreducible in $K$. If not, then repeated application of Theorem \ref{thm:irreducibility+test} implies that $f_1^{N}(f_2^\ell(0))=r_3w^{p_3}$ for some $w\in\mathfrak{o}_{K,S}$, some $r_3\in R_{K}$, some $p_3|d$, and some iterate $1\leq \ell\leq N$. But then Proposition \ref{prop:power} implies that $a=f_2^\ell(0)$ must be preperiodic for $f_1$. On the other hand, Theorem \ref{thm:prep+int} implies that 
\begin{equation}\label{eq:special+red2}
\PrePer(f_1,K)=\{\zeta P:\zeta\in\mu_{K,d}\},
\end{equation}
so we can write $f_2^\ell(0)=\zeta_1 P$ for some $\zeta_1\in\mu_{K,d}$. Next, set $r_4=\zeta_1 r_1$ and note that $r_4\in R_{K}$ since $r_1\in R_{K}$ and $R_{K}$ is closed under multiplying by roots of unity in $K$. Then, by combining \eqref{eq:special+red1} with \eqref{eq:special+red2}, we see that
\[f_2^\ell(0)=\zeta_1 P=\zeta_1 r_1y^{p_1}=r_4 y^{p_1}.\]
Hence, it follows from Remark \ref{rem:criticalorbitpowers} (see also the proof of Theorem \ref{thm:stable}) that $\ell=1$. But then by construction $\zeta_1 P=f_2^\ell(0)=f_2(0)=c_2$, so that $P/c_2$ is a $d$-th root of unity, a contradiction. In particular, we must have that $f_1^{N}\circ f_2^{N}$ is irreducible over $K$.

Similarly, if $f_1^{N}\circ f_2^{N}\circ g$ is reducible over $K$ for some $g\in G$, then repeated application of Theorem \ref{thm:irreducibility+test} implies that 
\[r_5u^{p_4}=f_1^{N}(f_2^{N}(b))\] 
for some $r_5\in R_{K}$, some $b,u\in \mathfrak{o}_{K,S}$, and some $p_4|d$. Thus Proposition \ref{prop:power}, this time applied to the map $f_1$ and the point $a=f_2^{N}(b)$, implies that $f_2^{N}(b)$ must be preperiodic for $f_1$. Hence, 
\[f_2^{N}(b)=\zeta_2 P=\zeta_2 r_1 y^{p_1}\]
for some $\zeta_2\in\mu_{K,d}$ by \eqref{eq:special+red1} and \eqref{eq:special+red2}. In particular, setting $r_6=\zeta_2 r_1\in R_{K}$, we see that 
\[f_2^{N}(b)=r_6y^{p_1}.\]
But then Proposition \ref{prop:power}, now applied to the map $f_2$ and the basepoint $b$, implies that $r_6y^{p_1}$ must be a periodic point for $f_2$. Hence, the point $r_6y^{p_1}=\zeta_2 r_1 y^{p_1}=\zeta_2 P$ is fixed by $f_2$ by Theorem \ref{thm:prep+int}. However, from here we see that 
\[r_2z^{p_2}=c_2=(\zeta_2 P)-(\zeta_2 P)^{d}=\zeta_2 P-\zeta_2^{d} P^{d}=\zeta_2 P-\zeta_3 P^{d}\]
for some $\zeta_3\in\mu_{K,d}$. In particular, we obtain a solution to a diophantine equation in Lemma \ref{lem:special+reducible}, so that $h(P)=0$. But then 
\[h(c_1)=h(P-P^{d})\leq h(P)+dh(P)+\log2=\log2,\]
and we reach a contradiction. The claim follows.
\end{proof}
We now have all the tools in place to prove our main irreducibility result; see Theorem \ref{thm:main+irreducible} in the introduction for a simplified version.  
\begin{thm}\label{thm:main+irreducible+more+specific} Let $K$ be a number field, let $S$ be a finite set of places of $K$ containing the archimedean ones, let $q$ be the largest rational prime dividing a place of $S$, let $t:=[K:\mathbb{Q}]$, and let $G=\langle x^{d}+c_1,\dots, x^{d}+c_s\rangle$ for some $c_1,\dots,c_s\in \mathfrak{o}_{K,S}$ and $d\geq2$. Then there is a constant $M_7(q,t)$ depending only on $q$ and $t$ such that if the following conditions are satisfied, \vspace{.1cm}  
\begin{enumerate}
    \item[\textup{(1)}] every prime $p|d$ satisfies $p>M_7(q,t)$, \vspace{.2cm} 
    \item[\textup{(2)}] $G$ contains an irreducible $f(x)=x^d+c$ with $h(c)>\log(3)$, \vspace{.2cm}  
\end{enumerate}
then at least one of the following statements must hold: \vspace{.15cm} 
\begin{enumerate}
\item[\textup{(a)}] There is a positive constant $N=N(d,q,t)$ depending on $d$, $q$, and $t$ and polynomials $f_1,f_2\in G$ such that 
\[\{f_1^N\circ f_2^N\circ g\,:\, g\in G\}\vspace{.1cm}\] 
is a set of irreducible polynomials in $K[x]$.   
\vspace{.2cm}  
\item[\textup{(b)}] There exists $P\in K$ such that the generating set $\{x^{d}+c_1,\dots, x^{d}+c_s\}$ is contained in \vspace{.1cm} 
\[\big\{x^d+c\, :\, h(c)\leq \log3\big\}\cup\Big\{x^d+(\zeta_1 P)-(\zeta_1 P)^d,\; x^d+\zeta_2 P\;:\;\zeta_1,\zeta_2\in\mu_{K,d}\Big\}. \vspace{.1cm}\]
Moreover, we have that $P=ry^p$ for some $y\in K$, some $r\in\{\pm{1},\pm{4}\}$, and some $p|d$.  
\end{enumerate} 
\end{thm}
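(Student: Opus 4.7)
The plan is to reduce the theorem to a case analysis over the generators of $G$, invoking each of Propositions~\ref{prop:nopoweredfixedpoints}, \ref{prop:2specialirred,unrelated}, and \ref{prop:oneirred+onered} in the appropriate scenario. Set
\[
M_7(q,t) := \max\{M_3(q,t),\, M_4(q,t),\, M_6(q,t)\},
\]
so that all three propositions apply simultaneously, and let $N = N(d,q,t)$ be the maximum of the $N$'s they produce. By hypothesis~(2), $G$ contains an irreducible element $f(x) = x^d + c$ with $h(c) > \log 3$; since every element of $G$ has degree $d^k$ for some $k \geq 1$, this forces $f$ itself to be one of the generators, say $f = x^d + c_1$.

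The first branching is on whether $f$ admits a powered fixed point in the sense of Definition~\ref{def:poweredfixedpoint}. If it does not, Proposition~\ref{prop:nopoweredfixedpoints} (applied to $f$) gives that $\{f^N \circ g : g \in G\}$ is a set of irreducibles; since $f^N \circ g \in G$ for every $g \in G$, taking $f_1 = f_2 = f$ we have $\{f_1^N \circ f_2^N \circ g : g \in G\} \subseteq \{f^N \circ h : h \in G\}$, so conclusion~(a) holds. Otherwise $f$ has a powered fixed point $P = ry^p$ with $r \in \{\pm 1, \pm 4\}$, $p \mid d$, and $y \in K$; in particular $c = P - P^d$. I then examine each remaining generator $x^d + c_i$ with $h(c_i) > \log 3$: if it is irreducible with no powered fixed point, Proposition~\ref{prop:nopoweredfixedpoints} applied to it yields~(a) as above; if it is irreducible with powered fixed point $P_i$ and $P_i/P \notin \mu_{K,d}$, Proposition~\ref{prop:2specialirred,unrelated} gives (a) after absorbing extra copies of $f_1$ or $f_2$ into $g$ (using $G$-closure under composition) to rewrite its conclusion in the required shape; and if it is reducible with $P/c_i \notin \mu_{K,d}$, Proposition~\ref{prop:oneirred+onered} gives~(a) directly.

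If none of these alternatives triggers for any generator, then every $x^d + c_i$ with $h(c_i) > \log 3$ is either irreducible with powered fixed point $P_i = \zeta_1 P$ for some $\zeta_1 \in \mu_{K,d}$, in which case $c_i = P_i - P_i^d = \zeta_1 P - (\zeta_1 P)^d$, or reducible with $c_i = \zeta_2 P$ for some $\zeta_2 \in \mu_{K,d}$. Combined with the remaining generators, which satisfy $h(c_i) \leq \log 3$, this is exactly the containment asserted in~(b), and $P = r y^p$ has the stated form. The analytic substance of the proof---namely controlling preperiodic points of powered expressions $r y^p$ via linear forms in logarithms---has already been absorbed into the three preparatory propositions, so the only technical wrinkle here is the uniformization step: reconciling the slightly different conclusion shapes in Propositions~\ref{prop:nopoweredfixedpoints} and~\ref{prop:2specialirred,unrelated} into the uniform form $\{f_1^N \circ f_2^N \circ g : g \in G\}$ demanded by~(a), which is done by noting that $f_j^{N-1} \circ g$ and $f^N \circ g$ both lie in $G$ whenever $g$ does.
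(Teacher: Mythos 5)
Your proposal is correct and follows essentially the same route as the paper: partition the generators by height and (ir)reducibility, use Proposition~\ref{prop:nopoweredfixedpoints} to force powered fixed points on the large-height irreducible generators, and then Propositions~\ref{prop:2specialirred,unrelated} and~\ref{prop:oneirred+onered} to force conclusion (b) whenever (a) fails, with $M_7$ the maximum of the relevant constants. Your explicit uniformization step (absorbing $f_2^{N-1}\circ g\in G$ to convert the conclusions of Propositions~\ref{prop:nopoweredfixedpoints} and~\ref{prop:2specialirred,unrelated} into the shape $\{f_1^N\circ f_2^N\circ g\}$) is only done implicitly in the paper, so this is a welcome clarification rather than a deviation.
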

\begin{proof} Let $G_1=\{x^{d}+c_1,\dots, x^{d}+c_s\}$ denote the generating set of $G$ and let $M_{7}(q,t)$ denote the maximum of all of the constants in this section respectively (so that all of the results in this section hold). Next assume that condition (2) of Theorem \ref{thm:main+irreducible} holds and decompose $G_1=G_{1,1}\cup G_{1,2}\cup G_{1,3}$ into the following subsets:\vspace{.1cm}  
\begin{equation}\label{eq:S+decomp}
\begin{split}
G_{1,1}&:=\big\{\phi(x)=x^d+c\in G_1\,:\, h(c)\leq \log3\big\},\\[5pt] 
G_{1,2}&:=\big\{\phi(x)=x^d+c\in G_1\,:\, h(c)>\log 3\;\text{and}\; \text{$\phi$ is irreducible over $K$}\big\},\\[5pt] 
G_{1,3}&:=\big\{\phi(x)=x^d+c\in G_1\,:\, h(c)>\log3\;\text{and}\; \text{$\phi$ is reducible over $K$}\big\}.\\[2pt] 
\end{split}
\end{equation}
Now we assume that there is some $f(x)=x^{d}+c\in G_{1,2}$. We first show that there is some $N$ and some $f_1,f_2\in G$ with $g=f_1^N\circ f_2^N$ such that $\{g\circ F: F\in G\}$ is a set of irreducible polynomials over $K$, unless the generating set $G_1$ of $G$ is of a special form. Thus, assume no such $g$ exists. Then Proposition \ref{prop:nopoweredfixedpoints} implies that \emph{every map} in $G_{1,2}$ has a powered fixed point. Let $P$ denote the powered fixed point of $f$. From here, Proposition \ref{prop:2specialirred,unrelated} applied separately to the pairs $(f,\phi)$ for each $\phi\in G_{1,2}$ implies 
\begin{equation}\label{eq:irred-decomp}
G_{1,2}\subseteq \Big\{x^d+(\zeta P)-(\zeta P)^{d}\,:\, \zeta\in\mu_{K,d}\Big\}.
\end{equation}
Likewise, Proposition \ref{prop:oneirred+onered} applied separately to the pairs $(f,\phi)$ for each $\phi\in G_{1,3}$ implies that    
\begin{equation}\label{eq:red-decomp}  
G_{1,3}\subseteq \Big\{x^d+\zeta P\,:\, \zeta\in\mu_{K,d}\Big\}.
\end{equation}
In particular, if there is no $g\in G$ such that $\{g\circ f: f\in G\}$ is a set of irreducible polynomials over $K$, then $G_1$ is necessarily of the special form 
\begin{equation}\label{eq:total-decomp}
G_1\subseteq G_{1,1}\cup \Big\{x^d+(\zeta_1 P)-(\zeta_1 P)^d,\; x^d+\zeta_2 P\;:\;\zeta_1,\zeta_2\in\mu_{K,d}\Big\}. 
\vspace{.15cm}
\end{equation} 
Moreover, since $P$ is a powered fixed point, we have that $P=ry^p$ for some $y\in K$, some $r\in\{\pm{1},\pm{4}\}$, and some prime $p|d$; see Definition \ref{def:poweredfixedpoint}. The claim follows.  
\end{proof}
\begin{remark} In particular, we deduce that if $G$ and $d$ satisfy conditions (1)-(3) of Theorem \ref{thm:main+irreducible} and if $G$ contains at least one irreducible polynomial (so that $G_{1,1}=\varnothing$ and $G_{1,2}\neq\varnothing$) over $K$, then $G_1$ cannot be of the form in \eqref{eq:total-decomp}. Hence, Theorem \ref{thm:main+irreducible+more+specific} implies Theorem \ref{thm:main+irreducible}.       
\end{remark}
\bigskip 
\bibliographystyle{plain}
\bibliography{PreperiodicIntegers} 
\bigskip
\bigskip 
\bigskip 
\end{document}